\documentclass{tsp-short}

\usepackage{enumitem}
\usepackage{hyperref}
\usepackage{graphicx}
\usepackage{mathtools}
\usepackage{caption}
\usepackage{placeins}

\newtheorem{theorem}{Theorem}[section]
\newtheorem{lemma}{Lemma}[section]
\newtheorem{corollary}{Corollary}[section]
\newtheorem{proposition}{Proposition}[section]
\theoremstyle{remark}

\theoremstyle{definition}
\newtheorem{definition}{Definition}[section]

\begin{document}

\title
[Polynomial regression  under a mixture of classical and Berkson errors]
{Polynomial regression model under a mixture of classical and Berkson errors}

\author{Oleksandr Liubimov}
\address{Taras Shevchenko National University of Kyiv, Volodymyrska str. 60, 01033 Kyiv, Ukraine}
\curraddr{}
\email{liubimov.o.m@gmail.com}

\author{Alexander Kukush}
\address{Institute of Mathematics NAS of Ukraine, 
 Tereschenkivska str. 3, 01024  Kyiv, Ukraine}
\curraddr{}
\email{akukush@imath.kiev.ua}

\subjclass[2020]{Primary 62J02; Secondary 62F10, 62F12}

\keywords{Polynomial errors-in-variables model, mixture of the classical and Berkson errors,
consistent estimators, Corrected Score estimators, asymptotic normality, asymptotically independent estimators}

\begin{abstract}
A polynomial structural regression model is studied, where the covariate is observed with
a mixture of the classical and Berkson measurement errors. Both variances of the classical and
Berkson errors, as well as some of their higher moments are assumed to be known. Without normality assumptions, consistent estimators of
model parameters are constructed using the Corrected Score method, and conditions for their asymptotic normality are given. Under mild conditions, we found pairs of asymptotically independent estimators. A simulation study illustrates the results. 
\end{abstract}
\maketitle
\section{Introduction and setup}
We deal with a polynomial regression model
$$y = \beta_0 + \beta_1 \xi+\beta_2\xi^2+\ldots+ \beta_d \xi^d+\varepsilon,$$
$$w=x+\delta \quad \text{and} \quad \xi=x+u.$$

\noindent Here, the degree of the polynomial $d\ge1$ is assumed to be known, $w$ is observable surrogate random variable, $y$ is observable response variable, $x$ and $\xi$ are unobservable latent variables, $\delta, u,$ and $\varepsilon$ are centered errors: more precisely, $\delta$ is the classical measurement error, $u$ is Berkson error, and $\varepsilon$ is the error in response; $\beta_0,  \ldots, \beta_d$ are   regression parameters.

The presented 
polynomial model  is an  analogue of the binary measurement error model \cite{Masiuk}, Section 7.2, 
arising in radio-epidemiology. It is the thyroid cancer prevalence model, logistic in flavor. There, an individual exposure dose is measured with a mixture of the classical (instrumental) error and Berkson one. The latter appears because a person from the underlying cohort is  assigned an averaged dose in case of a missing data. 

 Introduce the following assumptions for the polynomial
model.
\begin{enumerate}[label=(\roman*)]
     \item Random variables $x,\delta, u$, and $\varepsilon$ are  independent.
     \item Random variables $\delta, u$, and $\varepsilon$ have  zero mean and $\mathbf{E}[\varepsilon^2+\delta^{2d}+u^{2d}+x^{2d}]<\infty$.
    \item   Moments of $\delta$ and $u$ are known  up to order {2d}. 
    \item The distribution of $x$ is not concentrated at $d$ or fewer points.
\end{enumerate}

\noindent  The regression parameters are unknown, as well as  nuisance parameters $\sigma^2_{\varepsilon}:=\mathbf{Var}(\varepsilon)$ and moments  $\mathbf{E}x^l,  l=1,\ldots,2d.$ 

The model is \textit{structural}, which means that the latent variable $x$ is random, and in the observations we will have independent copies of the model with i.i.d. copies of $x$. The assumptions above are needed to provide the strong consistency of estimators.  For their asymptotic normality we will impose farther assumptions.

Our main goal is to estimate the regression parameters, but we are forced to estimate the nuisance parameters as well - either in order to construct reliable estimates of regression parameters, or to estimate the accuracy of the latter estimates. We study   asym\-ptotic properties of estimators of the
model parameters as the sample size is growing. This way we want to have a better
understanding of the above-mentioned binary model  and similar prevalence models.

In \cite{Yakovliev_Kukush} the underlying model was studied with $d=1$, i.e., it was  a linear regression model. First under the normally distributed $x$ and $u$, the model was rewritten as a linear model with the classical measurement error only, and then the Adjusted Least Squares estimator for regression parameters was derived, which coincides with the Adjusted Naive estimator, the Quasi-Likelihood estimator, and the Maximum Likelihood estimator (the latter in the case where all the underlying random variables were normally distributed) 
\cite{Masiuk}, Section 2.4. Note that assumptions (i) to (iv) are not restrictive, and in case $d=1$ they coincide with the assumptions for the consistency in the linear model \cite{Yakovliev_Kukush}.  

In present paper, the distribution of the latent variable $x$ is not specified. Therefore, the Quasi-Likelihood estimation is not possible, and we use instead the Corrected Score method. It is a \textit{functional} method, which means that it works as well for nonrandom realizations of $x$. In the linear model it is optimal in the sense that it provides the least possible expected loss \cite{Kukush_Maschke}. Of course in the polynomial model if we know the distribution of $x$ up to some nuisance parameters, it is preferable to use the Quasi-Likelihood estimator, since it is more asymptotically efficient compared with the Corrected Score one \cite{Kukush_Malenko}.

The structure of the paper is as follows. In Section 2 we construct  strongly consistent estimators of all the model parameters. Section 3 provides their asymptotic normality with a nonsingular ACM, and we study the asymptotic independence of some pairs of estimators. The latter issue is helpful when constructing the confidence region for the constellation of the model parameters. Section 4 contains the estimation of the proportion in which the error in response and Berkson error affect the precision of the estimate for the leading coefficient $\beta_d,$ the most important regression coefficient characterizing the influence of the regressor $\xi$ on the response variable $y$. A small simulation study for the quadratic model  is reported in Section 5, and Section 6 concludes.
 
We use the following notation.  The symbol $\mathbf{E}$ denotes expectation and acts as
an operator on the total product of quantities, and $\mathbf{Var}$ denotes variance.
Variance of a random variable $\psi$ is denoted as $\sigma_\psi^2,$ e.g., $\sigma_x^2:=\mathbf{Var}(x).$ Abbreviations \textit{r.v., ACM,} and \textit{CS} mean \textit{random variable, asymptotic covariance matrix,} and \textit{Corrected Score} respectively. Equalities for r.v.'s, random vectors, and random matrices hold a.s. Unless otherwise specified, each vector is a column one. Upper index $\mathrm{T}$ denotes transposition, and $A^{-\mathrm{T}}$ is the transposed inverse matrix $A^{-1}$. In the paper, all the vectors are column ones. Convergence in distribution is denoted as $\xrightarrow{d}$.  Bar
means averaging over $i = 1,\ldots,n$, e.g., $\overline{a}:=  n^{-1} \sum_{i=1}^n a_i,  ~\overline{ab}:=n^{-1} \sum_{i=1}^n a_ib_i.$

\section{Consistent estimation of  model parameters}\label{sec:consistent_estimators}

Consider  independent copies of the polynomial model
\begin{equation}\label{eq:regression}
y_i = \beta_0 + \beta_1 \xi_i+\beta_2\xi_i^2+\ldots+ \beta_d \xi_i^d+\varepsilon_i,
\end{equation}
\begin{equation}
\label{eq:errors}
w_i=x_i+\delta_i \quad \text{and} \quad \xi_i=x_i+u_i, \quad i=1,\ldots, n.
\end{equation}
\noindent Based on observations $(w_i, y_i)$, $1\leq i\leq n$, we want to construct  strongly consistent estimators for the unknown parameters.

 We will use the Corrected Score (CS) approach in order to correct the least-squares estimator. In construction of an estimating function we follow the lines of Cheng \& Schneeweiss (1998) \cite{Chen_Schneeweiss_1998} and Kukush \& Tsaregorodtsev (2016) \cite{Kukush_Tsaregorodtsev_2016}, where a polynomial regression with the classical measurement error and without Berkson one was studied.

  Denote 
$\beta=(\beta_0, \beta_1, \ldots, \beta_d)^{\mathrm{T}}$
 and 

$$m^{(2d)}_{x}= \left(\mu_x^{(1)}, \mu_x^{(2)}, \ldots, \mu_x^{(2d)}\right)^\mathrm{T}  = (\mathbf{E}x, \mathbf{E}x^2, \ldots, \mathbf{E}x^{2d})^{\mathrm{T}}.$$

\subsection{Estimation of the moments.} \label{subsec:estimation_of_moments_x}
If both measurement errors  are absent, the correspon\-ding estimating function is
$$S^{(m^{( 2d)}_{x})}\left(x, m^{( 2d)}_{x}\right) = m^{( 2d)}_{x} - (x,x^2, \ldots, x^{2d})^{\mathrm{T}}.$$

When the errors $\delta$ and $u$ are present, according to the CS
approach our goal is to construct the \textit{corrected estimating function} $S_{CS}^{(m^{( 2d)}_{x})}\left(w, m^{( 2d)}_{x}\right)$ with
$$\mathbf{E}\left[S_{CS}^{(m^{( 2d)}_x)}\left(w, m^{( 2d)}_x\right) \, \mid \, x \right] = S_{CS}^{(m^{( 2d)}_{x})}\left(w, m^{( 2d)}_{x}\right).$$

 For this purpose  we construct  the polynomials $t_r(w)$, $0 \leq r \leq 2d$ such that \\$\mathbf{E}[ t_r(w)  \mid x] = x^r$. Since $x$ and $\delta$ are independent, by Newton's binomial formula we get

$$\mathbf{E}[w^r \mid x] = x^r+\sum\limits_{i=0}^{r-1}\binom{r}{i}\mathbf{E}[\delta^{r-i}]\cdot x^i,$$ 

\noindent and thus by setting $t_0(w)=1$, according to Cheng \& Schneeweiss (1998)  \cite{Chen_Schneeweiss_1998} the polynomials $t_r(w), 0\leq r\leq2d$ can be constructed recursively:
$$t_{r}(w)=w^{r}-\sum_{i=0}^{r-1}\binom{r}{i}\mathbf{E}[\delta^{r-i}]\cdot t_i(w).$$

\noindent E.g., $t_0(w)=1$, $t_1(w)=w$, $t_2(w)=w^2-\mathbf{E}\delta^2.$

Therefore, the corrected estimating function is
\begin{equation}\label{estimating_function_moments_of_x}
    S_{CS}^{(m^{( 2d)}_{x})}\left(w, m^{( 2d)}_{x}\right) = m^{( 2d)}_{x} - (t_1(w), t_2(w), \ldots t_{2d}(w))^{\mathrm{T}},
\end{equation}

\noindent and the estimator is given as
\begin{equation}\label{estimator_moments_x}
\widehat{m}^{(2d)}_x=\left(\widehat{\mu}^{(1)}_x, \widehat{\mu}^{(2)}_x, \ldots, \widehat{\mu}^{(2d)}_x\right)^\mathrm{T}=\left(\overline{t_1(w)}, \overline{t_2(w)}, \ldots, \overline{t_{2d}(w)}\right)^{\mathrm{T}}
\end{equation}
\noindent with $~\overline{t_r(w)} = n^{-1}\sum_{1\leq i\leq n} t_r(w_i)$, $1\leq r\leq 2d$.

\subsection{Estimation of the regression coefficients.}
\label{subsec:estimation_regression_coefficients}
If the measurement errors $\delta$ and $u$ are absent, then we  have $w=x=\xi$ and the corresponding least-squares estimating function is given as
$$S^{(\beta)}(x, y, \beta) = [(x^{i+j})_{0\leq i, j \leq d}]~ \beta - (y, yx, yx^2, \ldots, yx^d)^{\mathrm{T}}.$$

\noindent However, when  Berkson error is present we have 
$$y=\beta^{(x)}_0(u)+\beta^{(x)}_1(u) x + \ldots + \beta^{(x)}_d(u)x^{d}+\varepsilon$$
\noindent with $\beta^{(x)}_0(u), \beta_1^{(x)}(u), \ldots, \beta^{(x)}_d(u)$ being  r.v.'s which depend only on $u$. Denote $\beta_r^{(x)}=\mathbf{E}\beta_r^{(x)}(u)$,  $0\leq r \leq d$. Assume for now that the classical error $\delta$ is still absent, then  the least-squares estimating function for the coefficients $\beta^{(x)}:=(\beta_0^{(x)}, \beta_1^{(x)}, \ldots, \beta_d^{(x)})^{\mathrm{T}}$ is 
$$S^{(\beta^{(x)})}(x, y, \beta^{(x)}) = [(x^{i+j})_{0\leq i, j \leq d}]~ \beta^{(x)} - (y, yx, yx^2, \ldots, yx^d)^{\mathrm{T}}.$$

Now, assume that both errors $\delta$ and $u$ are present. The idea is to construct the corrected estimating function for the coefficients $\beta^{(x)}$ and then evaluate $\beta$ via $\beta^{(x)}$ by the link $\beta = A^{-1} \beta^{(x)}$, where $A$ is a nonrandom invertible matrix to be specified later.

We construct the estimating function for $\beta^{(x)}$ in a form
$$S^{(\beta^{(x)})}_{CS}(w,y, \beta^{(x)})=T(w)\beta^{(x)} - h(w,y),$$

\noindent which satisfies the CS identity 
$$\mathbf{E}\left[S^{(\beta^{(x)})}_{CS}(w,y, \beta^{(x)}) \, \mid \, x,y \right] = S^{(\beta^{(x)})}(x,y, \beta^{(x)}).$$

\noindent And then the corrected estimating function for $\beta$ will be given by
$$S^{(\beta)}_{CS}(w,y, \beta) := S^{(\beta^{(x)})}_{CS}(w,y, A\beta)=T(w)A\beta - h(w,y).$$
\noindent For this purpose we  construct the matrix $T(w)$ and the vector $h(w,y)$ with
$$\mathbf{E}[T(w) \, | \, x,y] = (x^{i+j})_{0\leq i,j \leq d} \quad \text{and} \quad \mathbf{E}[h(w,y) \, | \, x,y]=(y,yx, \ldots, yx^d)^{\mathrm{T}}.$$
\vspace{10pt}
The estimation procedure  consists of the following five steps.

\noindent \textit{Step 1. Construct a $(d+1) \times (d+1)$ matrix $T(w)$ such that $\mathbf{E}[T(w)\mid x,y] = (x^{i+j})_{ 0\leq i,j \leq d}$.}

\vspace{5pt}

 Define 
$$T(w)= \left(t_{i+j}(w)\right)_{0\leq i,j \leq d}.$$
\noindent  Note that due to the construction of polynomials $t_0(w), t_1(w),\ldots, t_{2d}(w)$ and independence of $\delta$ and $y$, all the equalities 
$$\mathbf{E}[t_{i+j}(w) \mid x,y]=\mathbf{E}[t_{i+j}(w) \mid x] = x^{i+j}$$ 

\noindent hold, which implies the desired relation.

\noindent \textit{Step 2. In terms of $\beta$ and $u$ find coefficients $$\beta^{(x)}(u) :=(\beta_0^{(x)}(u),\, \beta_{1}^{(x)}(u), \, \ldots, \, \beta_d^{(x)}(u))^{\mathrm{T}}$$ such that $y = \sum\limits_{j=0}^{d}\beta_j^{(x)}(u)\cdot x^j+\varepsilon$}. 

We write $\xi = x+u$ and use Newton's binomial formula together with the independence of $x$ and $u$ to obtain
$$y = \sum_{j=0}^{d} \beta_j \left(\sum_{i=0}^{k}\binom{j}{i}u^{j-i}\cdot x^{i}\right)+\varepsilon.$$
\noindent  Swapping the sums, we get

$$y = \sum_{i=0}^{d} \left(\sum_{j=i}^{d}\binom{j}{i}\beta_j u^{j-i}\right)\cdot x^{i} + \varepsilon.$$

\noindent Therefore, the desired quantities are
$$\beta^{(x)}_i(u) = \sum_{j=i}^{d}\binom{j}{i}\beta_j u^{j-i}.$$

\noindent \textit{Step 3. Construct the vector $h(w,y)$ such that $\mathbf{E}[h(w,y) \mid x,y]=(y, yx, yx^2, \ldots, yx^d)^{\mathrm{T}}$.}

\vspace{5pt}

Note that $\delta$ is independent of $x,\xi,$ and $\varepsilon$, which implies that it is independent of all r.v.'s  $yx^{i}$, $i \geq 0,$ as well. Therefore, by $w=x+\delta$ and Newton's binomial formula we have
$$\mathbf{E}[yw^i \mid x, y] = \sum\limits_{j=0}^i\binom{i}{j}\mathbf{E}[\delta^{i-j}]\cdot yx^j, \quad 0\leq r\leq 2d.$$
\noindent Denote $D=\left(\binom{i}{j}\cdot \mathbf{E}\delta^{i-j}\right)_{0\leq i,j \leq d}$ with  usual convention $\binom{i}{j}= 0$ if $i < j$. Then
$$\mathbf{E}[(yw^0, yw^1, \ldots, yw^d)^{\mathrm{T}} \mid x,y]= D \cdot(yx^0, yx^1, \ldots, yx^{d})^\mathrm{T}.$$

\noindent Therefore, define the vector $h(w,y)$ as
$$h(w,y) = D^{-1}(yw^0, yw^1, \ldots, yw^d)^{\mathrm{T}}=y(t_0(w), t_1(w), \ldots, t_d(w))^{\mathrm{T}}.$$

\vspace{20pt}

\noindent \textit{Step 4. Evaluate $\beta^{(x)}:=\mathbf{E}\,\beta^{(x)}(u)$ in terms of the observed variables.}

By the definition of $\beta^{(x)}$ and independence of $x$ and  $u$ it is easy to verify that

$$[(\mathbf{E}x^{i+j})_{ 0\leq i,j \leq d}] \cdot \beta^{(x)} = \mathbf{E}(y, yx, yx^2, \ldots, yx^d)^{\mathrm{T}}.$$

\noindent Due to the constructions at steps 1 and 3 it is equivalent to
$$\mathbf{E}[T(w)]\cdot \beta^{(x)} = \mathbf{E}\, h(w,y).$$
\noindent Therefore,
$$\beta^{(x)} = (\mathbf{E}\, T(w))^{-1} \, \mathbf{E}\, h(w,y).$$

\vspace{5pt}

\noindent \textit{Step 5. Evaluate $\beta$ in terms of known information.}

\vspace{5pt}

Let $U:=\left(\binom{i}{j}\cdot\mathbf{E}u^{i-j}\right)_{0\leq i,j \leq d}$ with  usual convention $\binom{i}{j}= 0$ if $i < j$. By the formula established at the end of the step 3 and independence of $x,u$ we have
$\beta^{(x)}=U^{\mathrm{T}}\, \beta.$
Therefore, 
$$\beta = U^{-\mathrm{T}} \cdot \beta^{(x)} = U^{-\mathrm{T}} (\mathbf{E}\, T(w))^{-1} \, \mathbf{E}\, h(w,y).$$

The estimator for $\beta$ is given by
\begin{equation}\label{estimator_beta}
    \widehat{\beta} = U^{-\mathrm{T}} \, \overline{T(w)}^{\,-1} \, \overline{h(w,y)},
\end{equation}

\noindent where $\overline{T(w)} = n^{-1}\sum_{i=1}^n T(w_i)$ and $\overline{h(w,y)} = n^{-1}\sum_{i=1}^n h(w_i, y_i)$.
The corresponding corrected estimating function for $\beta$ is as follows
\begin{equation}\label{estimating_function_beta}
    S^{(\beta)}_{CS}(w,y,\beta)= T(w)U^{\mathrm{T}}\beta - h(w,y).
\end{equation}

\subsection{Estimation for  variance of the error in response.}\label{subsec:estimation_sigma_epsilon}
If the measurement errors are absent,  the least-squares estimating function for the variance $\sigma_\varepsilon^2$ is given as
$$S^{(\sigma^2_\varepsilon)} (\xi,y, \beta, \sigma_\varepsilon^2)=\sigma^2_\varepsilon - y^2 +\beta^\mathrm{T}\cdot[(\xi^{i+j})_{0\leq i,j\leq d}\,]\cdot \beta.$$
 This estimating function is unbiased, i.e., for the true values of unknown parameters its expectation equals zero.

In case the errors $\delta$ and $u$ are present, the corrected estimating function should be unbiased as well. Thus, to construct the true estimating function we have to find a matrix $G(w)$ with $\mathbf{E}\, G(w) = (\mathbf{E}\, \xi^{i+j})_{0\leq i,j \leq d}$. It can be done as follows.

First we construct  polynomials $g_r(w)$, $0\leq r \leq 2d,$ such that $\mathbf{E}\, g_r(w) = \mathbf{E} \, \xi^r$. Indeed, due to the independency of $u$ and $x$ we have
$$\mathbf{E}\xi^r = \sum_{j=0}^{r} \binom{r}{j}\mathbf{E}u^{r-j} \cdot \mathbf{E}x^j.$$
 In \hyperref[subsec:estimation_of_moments_x]{Subsection \ref*{subsec:estimation_of_moments_x}} we have constructed the polynomials $t_r(w)$, $0\leq r \leq 2d,$ with $\mathbf{E}t_r(w)=\mathbf{E}x^r$, which allows  to determine the polynomials $g_r(w)$ as
$$g_r(w)=  \sum_{j=0}^{r} \binom{r}{j}\mathbf{E}[u^{r-j}] \cdot t_j(w).$$
\noindent The matrix $G(w)$ is given by

$$G(w)=(g_{i+j}(w))_{0\leq i,j\leq d}.$$
The corrected estimating function for $\sigma^2_{\varepsilon}$ is defined as follows
\begin{equation}\label{estimating_function_sigma_epsilon}
    S^{(\sigma^2_\varepsilon)}_{CS}(w,y, \beta, \sigma_\varepsilon^2)=\sigma^2_\varepsilon - y^2 +\beta^{\mathrm{T}}\, G(w)\, \beta,
\end{equation}

\noindent and the estimator for $\sigma^2_{\varepsilon}$ is 
\begin{equation}\label{sigma_epsilon_estimator}  \widehat{\sigma^2_\varepsilon}=\overline{y^2} - \widehat{\beta}^{\mathrm{T}}\, \overline{G(w)}\, \widehat{\beta}.
\end{equation}

\section{Asymptotic normality and asymptotic independence of estimators}

We deal with independent copies \eqref{eq:regression}-\eqref{eq:errors} of the polynomial  model.
According to  \hyperref[sec:consistent_estimators]{Section \ref*{sec:consistent_estimators}}, the augmented  estimating function $S_{CS}(m^{( 2d)}_{x},\beta, \sigma^2_{\varepsilon}, w, y)$ for the parameters $m^{( 2d)}_{x}, \beta, \sigma^2_{\varepsilon}$ is given by
$$S_{CS}(m^{( 2d)}_{x}, \beta, \sigma^2_{\varepsilon}, w, y) = \left(S^{(m^{( 2d)}_{x})}_{CS},\ S_{CS}^{(\beta)}, \ S_{CS}^{(\sigma^2_{\varepsilon})}\right)^{\mathrm{T}},$$

\noindent where
$$S_{CS}^{(m^{( 2d)}_{x})}\left(w, m^{( 2d)}_{x}\right) = m^{( 2d)}_{x} - (t_1(w), t_2(w), \ldots t_{2d}(w))^{\mathrm{T}},$$
$$S^{(\beta)}_{CS}(w,y,\beta)= T(w)U^{\mathrm{T}}\beta - h(w,y),$$
$$S^{(\sigma^2_\varepsilon)}_{CS}(w,y, \beta, \sigma_\varepsilon^2)=\sigma^2_\varepsilon - y^2 +\beta^{\mathrm{T}}\, G(w)\, \beta.$$
 This estimating function is unbiased, i.e., for the true values of the parameters $m^{( 2d)}_{x},~ \beta$, and $\sigma_{\varepsilon}^2$ we have
$$\mathbf{E} \, S_{CS}(m^{( 2d)}_{x}, \beta, \sigma^2_{\varepsilon}, w, y) = 0.$$

\noindent In addition,  the estimators $\widehat{m}_x^{(2d)}, \widehat{\beta}, \widehat{\sigma}^2_{\varepsilon}$ given in \eqref{estimator_moments_x}, \eqref{estimator_beta}, and \eqref{sigma_epsilon_estimator}  satisfy the estimating equation
$$\frac{1}{n}\sum_{i=1}^{n} S_{CS}(\widehat{m}_x^{(2d)},\widehat{\beta}, \, \widehat{\sigma}^2_{\varepsilon}, \, w_i, \, y_i) = 0.$$

\subsection{Asymptotic normality}

Consider  two additional assumptions.

\begin{enumerate}[resume, label=(\roman*)]
    \item $\mathbf{E}[x^{4d}+ \delta^{4d}+ u^{4d}+ \varepsilon^4] < \infty$.

    \item Support of $\delta$ contains at least $2d$ nonzero points, and support of $\varepsilon$ contains at least $3$ points.
\end{enumerate}

The proof of the next statement is standard.
\begin{lemma}\label{lemma:linear_independence_of_monomials_epsilon_delta}
    Suppose that r.v.'s $\delta$ and $\varepsilon$ are independent and  condition (vi) holds. Then  r.v.'s $\delta, \delta^2, \ldots, \delta^{2d}, \varepsilon, \varepsilon\delta, \varepsilon\delta^2, \ldots, \varepsilon\delta^d$ are linearly independent.
\end{lemma}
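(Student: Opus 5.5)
Suppose a linear combination vanishes almost surely:
\[
\sum_{k=1}^{2d} a_k\delta^k + \varepsilon\sum_{j=0}^{d} b_j\delta^j = 0 .
\]
Write $p(t)=\sum_{k=1}^{2d}a_k t^k$ and $q(t)=\sum_{j=0}^d b_j t^j$. The relation reads $p(\delta)+\varepsilon q(\delta)=0$ a.s., and the goal is to show that all $a_k$ and $b_j$ vanish.

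First we use independence of $\delta$ and $\varepsilon$. The joint law of $(\delta,\varepsilon)$ is the product measure, so its support is the product of the supports. The function $(s,e)\mapsto p(s)+e\,q(s)$ is continuous and vanishes a.s. with respect to this product measure. It therefore vanishes on the support of the product measure, which is $\operatorname{supp}\delta\times\operatorname{supp}\varepsilon$. Hence $p(s)+e\,q(s)=0$ for every $s\in\operatorname{supp}\delta$ and every $e\in\operatorname{supp}\varepsilon$.

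Next we fix $s$ in the support of $\delta$. The map $e\mapsto p(s)+e\,q(s)$ is affine in $e$ and vanishes at every point of $\operatorname{supp}\varepsilon$. By (vi) this support contains at least $3$ points, so at least two distinct values of $e$ are available. An affine function with two zeros is identically zero, so $q(s)=0$ and $p(s)=0$ for all $s\in\operatorname{supp}\delta$.

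Now we count zeros. The polynomial $q$ has degree at most $d$. By (vi) it vanishes at at least $2d\ge d+1$ distinct points, so $q\equiv 0$ and all $b_j=0$. The polynomial $p$ has degree at most $2d$ and $p(0)=0$. It also vanishes at the at least $2d$ nonzero support points of $\delta$. This gives at least $2d+1$ distinct zeros, so $p\equiv 0$ and all $a_k=0$.

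The only delicate step is the passage from ``a.s.'' to ``at every point of the support''. This is exactly where independence and continuity are used. It also explains why the hypothesis asks for nonzero support points of $\delta$: the point $0$ is added as an extra root of $p$ for free. Everything else is elementary root counting.

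Finally, only two support points of $\varepsilon$ are actually needed in this argument, so the assumption of $3$ points is more than enough. The third point is presumably used elsewhere in the paper, for example for the nondegeneracy of the $\varepsilon^2$ terms in the asymptotic covariance matrix.
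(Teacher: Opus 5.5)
Your proof is correct and complete. The paper gives no proof here (it only says the statement is standard), and your argument is the standard one it omits: pass from the a.s.\ identity to every point of $\mathrm{supp}\,\delta\times\mathrm{supp}\,\varepsilon$ using independence and continuity, use two support points of $\varepsilon$ to force $q(s)=p(s)=0$, then count roots with $0$ as the free extra root of $p$. Your side remark is also accurate: the third support point of $\varepsilon$ is needed only later, in the theorem, to separate the quadratic-in-$\varepsilon$ component $S_{CS}^{(\sigma^2_\varepsilon)}$ from the other components.
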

\begin{theorem}
    Suppose that the conditions (i)–(v) hold. Then the estimator $$(\widehat{m}_x^{(2d)}, \widehat{\beta},\widehat{\sigma}^2_{\varepsilon})^\mathrm{T}$$ given in \eqref{estimator_moments_x}, \eqref{estimator_beta}, and  \eqref{sigma_epsilon_estimator} is asymptotically normal:

    \begin{equation}\label{eq:asymptotic_normality_of_estimators}
        \sqrt{n} \begin{pmatrix}
        \widehat{m}^{(2d)}_x-m^{(2d)}_x \\
        \widehat{\beta} - \beta \\
        \widehat{\sigma}^2_{\varepsilon} -\sigma^2_{\varepsilon}
    \end{pmatrix} \xrightarrow{d} \mathcal{N}_{3d+2}(0, \Sigma^{(m^{( 2d)}_{x}, \ \beta, \ \sigma^2_{\varepsilon} )}).
    \end{equation}

    \noindent In addition, if the condition (vi) holds, then the ACM~ $~\Sigma^{(m^{( 2d)}_{x}, \ \beta, \ \sigma^2_{\varepsilon} )}$ is nonsingular. 
 
\end{theorem}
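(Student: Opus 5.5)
The plan is to use the standard sandwich argument for estimators defined by an unbiased estimating equation. Put $\theta=(m_x^{(2d)},\beta,\sigma^2_\varepsilon)^{\mathrm{T}}\in\mathbb{R}^{3d+2}$ and let $\widehat\theta$ be the estimator. We know that $\mathbf{E}\,S_{CS}(\theta,w,y)=0$ and that $\overline{S_{CS}(\widehat\theta,w,y)}=0$.

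\emph{Asymptotic normality.} The function $S_{CS}$ is polynomial in $\theta$, so a Taylor expansion around the true $\theta$ gives
$$0=\overline{S_{CS}(\theta)}+\overline{\partial_\theta S_{CS}(\theta^*)}\,(\widehat\theta-\theta),$$
with $\theta^*$ between $\widehat\theta$ and $\theta$; the $\sigma^2_\varepsilon$-component is quadratic in $\beta$, so this expansion is in fact exact up to a term that is $O(|\widehat\theta-\theta|^2)$. Section~2 gives strong consistency, and the strong law of large numbers gives $\overline{\partial_\theta S_{CS}(\theta^*)}\to V:=\mathbf{E}\,\partial_\theta S_{CS}(\theta)$ almost surely.

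The matrix $V$ is block lower triangular with diagonal blocks $I_{2d}$, $\mathbf{E}T(w)\,U^{\mathrm{T}}=(\mathbf{E}x^{i+j})_{i,j}\,U^{\mathrm{T}}$ and $1$. The first of these is nonsingular by (iv), since the Hankel moment matrix is positive definite. The second is nonsingular because $U$ is unit lower triangular. The CLT applies to $\sqrt n\,\overline{S_{CS}(\theta)}$ because (v) makes all second moments finite. The entries of $S_{CS}$ are built from $t_{2d}(w)$, $y\,t_d(w)$, $y^2$ and $g_{2d}(w)$, and their squares are bounded by moments of order $4d$ of $x,\delta,u$ and of order $4$ in $\varepsilon$ (e.g.\ $y^2w^{2d}$ involves $\xi^{2d}w^{2d}$, handled by Hölder and independence). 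Hence
$$\sqrt n(\widehat\theta-\theta)\xrightarrow{d}\mathcal N(0,\Sigma),\qquad \Sigma=V^{-1}BV^{-\mathrm{T}},\qquad B=\mathbf{Cov}\,S_{CS}(\theta,w,y).$$

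\emph{Nonsingularity.} Since $V$ is invertible, $\Sigma$ is nonsingular if and only if $B$ is. That holds if and only if no nonzero linear combination $c^{\mathrm{T}}S_{CS}(\theta,w,y)$ is almost surely constant. Such a combination is a polynomial in $(x,\delta,u,\varepsilon)$ of the form
$$-a^{\mathrm{T}}(t_1(w),\dots,t_{2d}(w))+b^{\mathrm{T}}\bigl(T(w)U^{\mathrm{T}}\beta-y(t_0,\dots,t_d)^{\mathrm{T}}\bigr)+c\,\bigl(-y^2+\beta^{\mathrm{T}}G(w)\beta\bigr).$$

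The idea is to condition on $x$ and $u$ (equivalently, fix them at values in the support) and view the expression as a function of $(\delta,\varepsilon)$. Expanding $t_r(x+\delta)$ in powers of $\delta$ and $y=f(\xi)+\varepsilon$, the highest-degree pieces isolate the monomials $\delta^k$, $\varepsilon\delta^k$ and $\varepsilon^2$. The $\varepsilon^2$ term has coefficient $-c$. Lemma~\ref{lemma:linear_independence_of_monomials_epsilon_delta} only covers $\delta^k$ and $\varepsilon\delta^k$, so I would first prove the slightly stronger fact that $1,\varepsilon,\varepsilon^2$ are linearly independent modulo functions of $\delta$. This follows from the three support points of $\varepsilon$ together with independence, and it forces $c=0$.

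With $c=0$, the coefficient of $\varepsilon\delta^k$ for $k\le d$ is, up to a triangular transformation coming from $t_k(w)=\delta^k+(\text{lower powers of }\delta\text{ times powers of }x)$, equal to $-b_k$. Applying the Lemma (for fixed $x,u$) then forces $b=0$. With $b=c=0$ what remains is $a^{\mathrm{T}}(t_1,\dots,t_{2d})$. Its coefficients on $\delta^k$ are triangular in $a$ with unit leading terms, so the Lemma's independence of $\delta,\dots,\delta^{2d}$ gives $a=0$.

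The main obstacle is this last step: carefully verifying that, after conditioning on $x$ and $u$, the leading coefficients in $\delta$ and $\varepsilon$ form a triangular system with nonzero diagonal. Mixed terms such as $y\,t_k(w)$ contain $f(\xi)\,\delta^k$, which has to be absorbed into the pure-$\delta$ monomials without spoiling the triangular structure. I would handle this by ordering the unknowns as $c$, then $b_d,\dots,b_0$, then $a_{2d},\dots,a_1$, and peeling them off in that order.
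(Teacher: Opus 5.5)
Your strategy is the paper's. Asymptotic normality comes from the sandwich formula with the block lower-triangular $V$; the paper simply invokes a general theorem on unbiased estimating equations, while you sketch the Taylor/SLLN/CLT argument. Nonsingularity of $C$ comes from linear independence of the components of $S_{CS}$: the $\varepsilon^2$ term eliminates the $\sigma^2_\varepsilon$-component, the $\varepsilon\delta^k$ terms eliminate $b$, and the $\delta^k$ terms eliminate $a$. The only structural difference is that you freeze $(x,u)$, while the paper conditions on $(\delta,\varepsilon)$. Your steps giving $c=0$ and then $b=0$ are sound. The coefficient of $\varepsilon$ must vanish identically in $\delta$, and a polynomial of degree at most $d$ vanishing on $2d\ge d+1$ support points of $\delta$ is zero.

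The last step does not go through as written. Once $b=c=0$, at a fixed $x_0$ you only know that $\sum_{k=1}^{2d}a_k t_k(x_0+\delta)$ is a.s.\ \emph{constant} in $\delta$, and this polynomial generally has a nonzero $\delta^0$ term. The Lemma gives independence of $\delta,\dots,\delta^{2d}$, not of $1,\delta,\dots,\delta^{2d}$. Condition (vi) allows $\delta$ to have exactly $2d$ support points, and then the latter independence fails. Concretely, take $d=1$ and $\delta=\pm1$ with probability $1/2$ each, so (vi) holds. Then $t_2(x_0+\delta)-2x_0\,t_1(x_0+\delta)=\delta^2-1-x_0^2=-x_0^2$, so at each fixed $x_0$ the vector $a=(-2x_0,1)^{\mathrm{T}}\neq 0$ passes your test.

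The repair uses the variation in $x$, which your framework retains. With $b=c=0$, the combination equals a constant minus $p(w)$, where $p=\sum_k a_k t_k$ is a polynomial in $w$ of degree $\max\{k:a_k\neq 0\}$, since each $t_k$ is monic of degree $k$. By (iv), (vi) and independence, the support of $w=x+\delta$ contains at least $(d+1)+2d-1=3d\ge 2d+1$ points. Hence $p$ is constant and $a=0$.

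The paper's own argument has the same blind spot, in a form that is harder to fix. In the example above the conditional expectations $\mathbf{E}[S^{(m_x^{(2d)})}_{CS}\mid\delta,\varepsilon]=(-\delta,\,-2\mu^{(1)}_x\delta)^{\mathrm{T}}$ are genuinely linearly dependent. So the paper's route cannot be repaired without bringing $x$ back in, whereas your choice of freezing $(x,u)$ can.
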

\begin{proof}
    We have  
    $$V:= \mathbf{E}~\frac{\partial S}{\partial (m^{( 2d)}_x,\beta, \sigma^2_{\varepsilon})} = \mathbf{E} \ \begin{pmatrix}
        I_{2d} & \mathcal{O} & \mathcal{O} \\
        \mathcal{O} & T(w) \, U^{\mathrm{T}}  & \mathcal{O} \\ 

        \mathcal{O} & 2\beta^{\mathrm{T}}\, G(w) & 1
    \end{pmatrix},$$

    \noindent where $I_{2d}$ is the identity matrix of order $2d$. Clearly, the matrix $V$ is nonsingular since the matrix $\mathbf{E} \, U^{\mathrm{T}}\, T(w)$ is nonsingular. Therefore, by  Theorem A.26 in  \cite{Masiuk}
    we see that \eqref{eq:asymptotic_normality_of_estimators} holds and the asymptotic matrix $\Sigma^{(m^{(, 2d)}_x, \ \beta, \ \sigma^2_{\varepsilon})})$ can be evaluated by the Sandwich Formula

    $$\Sigma^{(m^{(2d)}_x, \ \beta, \ \sigma^2_{\varepsilon} )} = V^{-1} \, C \, V^{-\mathrm{T}}$$

    \noindent with $C:= \mathbf{E}_{(m^{(2d)}_x, \ \beta, \ \sigma^2_{\varepsilon})}\, [S_{CS}S_{CS}^{\mathrm{T}}]$.

    To accomplish  the proof of the theorem it remains to show that matrix $C$ is nonsingular when additionally condition (vi) holds. It suffices to show that components of the random vector $S_{CS}$ are linearly independent for the true values of the parameters $m^{(2d)}_x,~   \beta,$ and $ \sigma^2_{\varepsilon}$.

    First we claim that components of the augmented estimation function
    \begin{equation}\label{eq:augmented}
\left(S_{CS}^{(m^{( 2d)}_x)\mathrm{T}},\ S_{CS}^{(\beta)\mathrm{T}} \right)^\mathrm{T}
\end{equation}

    are linearly independent. It is sufficient to show that  components of the vector
    \begin{equation}\label{eq:conditional}
    \mathbf{E}\left[\left(S_{CS}^{(m^{( 2d)}_x)\mathrm{T}},\ S_{CS}^{(\beta)\mathrm{T}} \right)^\mathrm{T}\mid \delta, \varepsilon\right]
    \end{equation}
    are linearly independent. Notice that according to the constructions given in \hyperref[subsec:estimation_of_moments_x]{Subsection \ref*{subsec:estimation_of_moments_x}} and \hyperref[subsec:estimation_regression_coefficients]{Subsection \ref*{subsec:estimation_regression_coefficients}}, by expanding $w=x+\delta$, $y=\beta_0 + \beta_1 \xi + \ldots + \beta_d \xi^d + \varepsilon$ and applying the fact that $\delta$ is independent of $x$ and $y$, and $\varepsilon$ is independent of $\xi$ and $w$, one can see that the \textit{i}th component of the vector $\mathbf{E}\left[S_{CS}^{(\beta)} \, \mid \, \delta, \varepsilon\right]$ is a bivariate polynomial in $\varepsilon, \delta$ with the leading monomial being exactly $\varepsilon\delta^{i}, i=0, 1,\ldots,d$. In addition, it is clear that the $i$th component of the vector $\mathbf{E}\left[S_{CS}^{(m_x^{(2d)})} \mid \delta, \varepsilon\right]$ is   univariate polynomial in $\delta$ of degree exactly $i$. Therefore, by condition (vi) and \hyperref[lemma:linear_independence_of_monomials_epsilon_delta]{Lemma \ref*{lemma:linear_independence_of_monomials_epsilon_delta}} all the components of  \eqref{eq:conditional}
    are linearly independent, which yields the linear independency of components of \eqref{eq:augmented}. 
    
    Now, we show that the r.v. $$S_{CS}^{(\sigma^2_{\varepsilon})}:=\sigma^2_{\varepsilon}-y^2 + \beta^{\mathrm{T}}\, U\,T(w) U^{\mathrm{T}} \, \beta$$ cannot
    be expressed as a linear combination of the components of  \eqref{eq:augmented}. Notice that because $\varepsilon$ and $w$ are independent,  each component of  $$\mathbf{E}\left[\left(S_{CS}^{(m^{( 2d)}_x)\mathrm{T}},\ S_{CS}^{(\beta)\mathrm{T}} \right)^\mathrm{T}\mid \delta, \varepsilon\right]$$ is a  polynomial in $\varepsilon$ of degree at most $1$. On the other hand, $\mathbf{E}\left[S_{CS}^{(\sigma^2_{\varepsilon})} \, | \, \varepsilon\right]$ is a polynomial in $\varepsilon$ of exact degree $2$. Since by assumption the support of $\varepsilon$ contains at least $3$ points we conclude that $\mathbf{E}\left[S_{CS}^{(\sigma^2_{\varepsilon})} \, | \, \varepsilon\right]$ cannot be a linear combination of components of  $\mathbf{E}\left[\left(S_{CS}^{(m^{( 2d)}_x)\mathrm{T}},\ S_{CS}^{(\beta)\mathrm{T}} \right)^\mathrm{T}\,|\,\varepsilon\right]$. Therefore, $S_{CS}^{(\sigma^2_{\varepsilon})}$ cannot be expressed as a linear combination of components of  \eqref{eq:augmented}.
\end{proof}

     Our proof gives  a  way how to estimate $\Sigma^{(m_x^{(2d)}, \ \beta, \ \sigma^2_{\varepsilon} )}$  consistently.

\begin{proposition}
    Assume that conditions (i)–(v) hold. Then the matrix $\Sigma^{(m_x^{(2d)}, \ \beta, \ \sigma^2_{\varepsilon} )}$ defined in previous theorem can be estimated strongly consistently by

    $$\widehat{\Sigma}^{(m_x^{(2d)}, \ \beta, \ \sigma^2_{\varepsilon})} := \widehat{V}^{-1} \, \widehat{C}  \, \widehat{V}^{-\mathrm{T}},$$

    \noindent where 

    $$\widehat{V} := \frac{1}{n}\sum_{i=1}^{n}\begin{pmatrix}
        I_{2d} & \mathcal{O} & \mathcal{O}\\
        \mathcal{O} & T(w_i) \, U^{\mathrm{T}}  & \mathcal{O} \\ 

        \mathcal{O} & 2\widehat{\beta}^{\mathrm{T}}\, G(w_i) & 1
    \end{pmatrix}, 
~~\widehat{C} := \frac{1}{n}\sum_{i=1}^{n} \widehat{S_i}\, \widehat{S}_i^{\,\mathrm{T}},$$

    \noindent and $~\widehat{S}_i := \left(\widehat{S}_i^{(m_x^{(2d)})\mathrm{T}}, \ \widehat{S}_i^{(\beta)\mathrm{T}}, \ \widehat{S}_i^{(\sigma^2_{\varepsilon})\mathrm{T}}\right)^{\mathrm{T}}$ with

    $$\widehat{S_i}^{(m_x^{(2d)})} := \widehat{m}_x^{(2d)} - (t_1(w_i), t_2(w_i), \ldots, t_{2d}(w_i))^{\mathrm{T}}, $$

    $$\widehat{S}_i^{(\beta)} := T(w_i) \, U^{\mathrm{T}} \widehat{\beta} - h(w_i, y_i),\quad\text{and}\quad\widehat{S}_i^{(\sigma^2_{\varepsilon})} := \widehat{\sigma}^2_{\varepsilon} - y^2_i + \widehat{\beta}^{\, \mathrm{T}} G(w_i) \widehat{\beta}.$$
\end{proposition}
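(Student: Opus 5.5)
The plan is to show separately that $\widehat V\to V$ and $\widehat C\to C$ almost surely. Then, since matrix inversion and multiplication are continuous at the nonsingular matrix $V$, the continuous mapping theorem gives $\widehat\Sigma\to V^{-1}CV^{-\mathrm{T}}=\Sigma$ a.s. Two ingredients are taken as given: strong consistency of $(\widehat m_x^{(2d)},\widehat\beta,\widehat\sigma^2_\varepsilon)$ from Section \ref{sec:consistent_estimators}, which holds under (i)–(iv), and the strong law of large numbers (SLLN) for i.i.d. copies.

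\emph{Convergence of $\widehat V$.} The entries of $\widehat V$ are either constants, entries of $\overline{T(w)}\,U^{\mathrm T}$, or entries of $2\widehat\beta^{\mathrm T}\overline{G(w)}$. The entries of $T(w)$ and $G(w)$ are polynomials in $w$ of degree at most $2d$. Since $\mathbf E w^{2d}<\infty$ by (ii), the SLLN gives $\overline{T(w)}\to\mathbf E T(w)$ and $\overline{G(w)}\to\mathbf E G(w)$ a.s. Combining this with $\widehat\beta\to\beta$ a.s. yields $\widehat V\to V$ a.s.

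\emph{Convergence of $\widehat C$.} Each $\widehat S_i$ is a polynomial in $(w_i,y_i)$ whose coefficients are polynomial in the parameter $\theta=(m_x^{(2d)},\beta,\sigma^2_\varepsilon)$, and the coefficients depend on $\theta$ only through $\theta$ itself. Hence $\widehat C=\frac1n\sum_i S_{CS}(\widehat\theta,w_i,y_i)S_{CS}(\widehat\theta,w_i,y_i)^{\mathrm T}$ can be expanded as
\[
\widehat C=\sum_k p_k(\widehat\theta)\,\overline{q_k(w,y)},
\]
a finite sum in which the $p_k$ are polynomials in $\theta$ and the $q_k$ are fixed monomials $w^a y^b$. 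The true $C$ is obtained by the same expansion as $C=\sum_k p_k(\theta)\,\mathbf E q_k(w,y)$.

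\emph{Integrability of the monomials.} This is the main point to check. The products in $S_{CS}S_{CS}^{\mathrm T}$ involve at worst the following terms:
\begin{enumerate}[label=(\alph*)]
\item $t_{2d}(w)^2$ and $(\beta^{\mathrm T}G(w)\beta)^2$, which have degree $4d$ in $w$;
\item $y^4$;
\item $y^2t_j(w)t_k(w)$ with $j,k\le d$;
\item $y^2\beta^{\mathrm T}G(w)\beta$ and the cross terms $t_{i+j}(w)\,y\,t_k(w)$.
\end{enumerate}
Under (v), $\mathbf E w^{4d}<\infty$ since $x$ and $\delta$ are independent with finite $4d$-th moments. Likewise $\mathbf E y^4<\infty$, because $y$ is a polynomial of degree $d$ in $\xi=x+u$ plus $\varepsilon$, with $\mathbf E\xi^{4d}<\infty$ and $\mathbf E\varepsilon^4<\infty$. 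Every monomial $w^a y^b$ with $a/(4d)+b/4\le1$ is then integrable by Hölder's inequality, and all products appearing above satisfy this constraint. For instance, $y^2w^{2d}$ gives $1/2+1/2=1$ and $y\,w^{3d}$ gives $3/4+1/4=1$. Thus each $\mathbf E|q_k|<\infty$.

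\emph{Conclusion.} By the SLLN, $\overline{q_k}\to\mathbf E q_k$ a.s. for every $k$, and $p_k(\widehat\theta)\to p_k(\theta)$ a.s. by strong consistency. Hence $\widehat C\to C$ a.s., and with $\widehat V\to V$ a.s. this gives $\widehat\Sigma\to\Sigma$ a.s.

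Note that nonsingularity of $C$, which requires (vi), is not needed for this argument. Only the invertibility of $V$, already established in the previous theorem, is used; it also ensures that $\widehat V$ is invertible for all large $n$ almost surely.
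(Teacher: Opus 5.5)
Your argument is correct and is the standard plug-in reasoning the paper relies on; the paper gives no separate proof and only remarks that the sandwich representation $\Sigma=V^{-1}CV^{-\mathrm{T}}$ from the theorem yields the estimator. You establish almost sure convergence of $\widehat V$ and $\widehat C$ via the SLLN and strong consistency of $(\widehat m_x^{(2d)},\widehat\beta,\widehat\sigma^2_\varepsilon)$, then use continuity of $(V,C)\mapsto V^{-1}CV^{-\mathrm{T}}$ at the nonsingular $V$. One small omission: your list of worst-case monomials leaves out the term $y^3 t_a(w)$ coming from $S^{(\beta)}_{CS}S^{(\sigma^2_\varepsilon)}_{CS}$, but it also satisfies your H\"older constraint, since $d/(4d)+3/4=1$.
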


\subsection{Asymptotic independence}

\begin{definition}
    \textit{Let} $\widehat{\theta}=(\widehat{\theta}_1, \ldots, \widehat{\theta}_m)^\mathrm{T} \in \mathbf{R}^m$ \textit{be asymptotically normal estimator of the parameter} $\theta=(\theta_1,  \ldots, \theta_m)^\mathrm{T} \in \mathbf{R}^m$, \textit{i.e.,}
    $$\sqrt{n}(\widehat{\theta}_1-\theta_1,\ldots,
\widehat{\theta}_m -\theta_m)^\mathrm{T}      
   \xrightarrow{d} \mathcal{N}_{m}(0, \Sigma)$$

   \noindent where $\Sigma$ is nonsingular for all possible values of model parameters.   \textit{Then  the estimators $\widehat{\theta}_i$ and  $\widehat{\theta}_j$ are called asymptotically independent if the   matrix $\Sigma$ has zero $(i,j)$-entry} for all possible values of model parameters.
\end{definition}

We introduce a new assumption.

   \begin{enumerate}[resume, label=(\roman*)]
       \item The first $2d$ moments of $\delta$ coincide with those of the distribution $\mathcal{N}(0,\sigma^2_\delta)$. 
   \end{enumerate}

By $\mathbf{He}_k$ denote the $k$th  Hermite polynomial with unit leading coefficient \cite{Szegő}. In addition,  denote by 
$$\mathbf{He}^{[\sigma^2]}_k(z)= \begin{cases}
    \sigma^k \cdot \mathbf{He}_k\left(\frac{z}{\sigma}\right) & \text{for $\sigma^2 > 0$}, \\
    z^k & \text{for $\sigma^2 = 0$},
    
\end{cases}$$ 
\noindent the scaled Hermite polynomials.   The polynomials $\mathbf{He}^{[\sigma^2]}_k(z), k\ge 0,$ are orthogonal in $L_2$ space of r.v.'s under the normal distribution $\mathcal{N}(0,\sigma^2)$ of $z$. 

The next nice property of Hermite polynomials is a direct consequence of Lemma 3.3 \cite{Masiuk}, where the property  was shown for normally distributed $\delta.$
\begin{lemma}\label{lemma:t_are_hermite_polynomials}
    Suppose that  assumptions (i), (ii), and (vii) hold, then for $0\leq r \leq 2d$ it holds
    $$t_r(z) = \mathbf{He}^{[\sigma_\delta^2]}_k(z), z\in \mathbf{R}.$$
\end{lemma}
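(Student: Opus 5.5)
The plan is to show that the scaled Hermite polynomials satisfy the same recursion as the $t_r$, with the same initial value. Then induction on $r$ gives $t_r=\mathbf{He}^{[\sigma_\delta^2]}_r$ for $0\le r\le 2d$. (The index $k$ in the displayed formula should of course read $r$.)

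The key observation is that the recursion
$$t_r(w)=w^r-\sum_{i=0}^{r-1}\binom{r}{i}\mathbf{E}[\delta^{r-i}]\,t_i(w)$$
involves $\delta$ only through the moments $\mathbf{E}\delta^{1},\ldots,\mathbf{E}\delta^{r}$ with $r\le 2d$. By assumption (ii) these moments are finite, and by (vii) they coincide with the moments of $\delta^*\sim\mathcal{N}(0,\sigma_\delta^2)$. Hence the polynomials $t_r$ built from $\delta$ are identical to the polynomials $t_r^*$ built by the same recursion from $\delta^*$. Equivalently, $t_r$ is the unique monic polynomial of degree $r$ with $\mathbf{E}[t_r(x+\delta)]=x^r$ for all real $x$: the recursion shows existence, and uniqueness holds because the map $p\mapsto \mathbf{E}\,p(\cdot+\delta)$ is unitriangular on polynomials of degree $\le r$. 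This identity depends only on the first $r$ moments of $\delta$.

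It therefore suffices to prove the statement for normal $\delta^*$, which is the content of Lemma 3.3 in \cite{Masiuk}. If one wants a self-contained argument, I would verify directly that
$$\mathbf{E}\,\mathbf{He}^{[\sigma^2]}_r(x+\delta^*)=x^r \quad\text{for } \delta^*\sim\mathcal{N}(0,\sigma^2).$$
The cleanest route uses the generating function $\sum_r \mathbf{He}^{[\sigma^2]}_r(z)\,s^r/r! = \exp(sz-\sigma^2 s^2/2)$. Taking expectations with $z=x+\delta^*$ gives $\exp(sx)\,\mathbf{E}e^{s\delta^*}e^{-\sigma^2s^2/2}=e^{sx}$, and comparing coefficients of $s^r$ yields the claim. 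Since $\mathbf{He}^{[\sigma^2]}_r$ is monic of degree $r$, uniqueness then gives $t_r^*=\mathbf{He}^{[\sigma^2]}_r$. The degenerate case $\sigma_\delta^2=0$ is immediate: then $\delta=0$ a.s. (its second moment vanishes), so $t_r(w)=w^r$, matching the definition of $\mathbf{He}^{[0]}_r$.

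The only point needing care, and the main obstacle, is justifying that moments of order up to $r\le 2d$ suffice. Two facts settle it. First, the recursion for $t_r$ uses exactly the moments of orders $1,\ldots,r$. Second, the uniqueness argument only compares expectations of polynomials of degree $\le r$ in $\delta$. Both are guaranteed by (ii) and (vii), so no further moment assumptions are needed.
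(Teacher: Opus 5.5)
Your proposal is correct and is essentially the paper's argument. The paper states the lemma as a direct consequence of Lemma 3.3 in \cite{Masiuk} for normal $\delta$, and the implicit reduction is exactly yours: the recursion for $t_r$ involves only $\mathbf{E}\delta,\ldots,\mathbf{E}\delta^r$ with $r\le 2d$, which by (vii) agree with the moments of $\mathcal{N}(0,\sigma_\delta^2)$. Your generating-function check and uniqueness argument just make the cited normal case self-contained, and you are right that the index $k$ in the display should read $r$.
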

\begin{lemma}\label{lemma:inner_product_of_t_given_x}
 Suppose that  assumptions (i), (ii) and (viii) hold. Then for all\\ $0\leq a, b\leq 2d$ one has
$$\mathbf{E}\left[t_a(w)t_b(w)\mid x\right] = \sum_{l=0}^{\min\{a,b\}} l! \binom{a}{l}\binom{b}{l}\sigma^{2l}_{\delta} x^{a+b-2l}.$$
\end{lemma}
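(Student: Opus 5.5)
We will prove the identity through the Hermite representation from Lemma~\ref{lemma:t_are_hermite_polynomials}. (The statement cites assumption (viii), which is evidently a misprint for (vii).) Under (i), (ii), (vii) we have $t_r(z)=\mathbf{He}^{[\sigma_\delta^2]}_r(z)$ for $0\le r\le 2d$. Hence $t_a(w)t_b(w)=\mathbf{He}^{[\sigma_\delta^2]}_a(x+\delta)\,\mathbf{He}^{[\sigma_\delta^2]}_b(x+\delta)$.

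If $\sigma_\delta^2=0$, then $\delta=0$ a.s. In that case $t_r(w)=x^r$ and the formula reduces to its $l=0$ term, so it is trivial. Assume from now on $\sigma_\delta^2>0$.

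\textbf{Step 1: expand and use independence.} Apply the addition formula for scaled Hermite polynomials,
\[
\mathbf{He}^{[\sigma^2]}_k(x+\delta)=\sum_{j=0}^{k}\binom{k}{j}x^{k-j}\,\mathbf{He}^{[\sigma^2]}_j(\delta).
\]
This follows from the generating function $\exp(tz-\sigma^2t^2/2)=\sum_k \mathbf{He}^{[\sigma^2]}_k(z)t^k/k!$ by splitting $e^{t(x+\delta)}=e^{tx}e^{t\delta}$ and comparing coefficients. Substituting for both factors, the product becomes
\[
\sum_{i,j}\binom{a}{i}\binom{b}{j}x^{a+b-i-j}\,\mathbf{He}^{[\sigma^2_\delta]}_i(\delta)\,\mathbf{He}^{[\sigma^2_\delta]}_j(\delta).
\]
Since $x$ and $\delta$ are independent by (i), conditioning on $x$ reduces the problem to computing $\mathbf{E}[\mathbf{He}^{[\sigma^2_\delta]}_i(\delta)\mathbf{He}^{[\sigma^2_\delta]}_j(\delta)]$ for $i\le a$ and $j\le b$.

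\textbf{Step 2: replace $\delta$ by a normal variable.} The product $\mathbf{He}^{[\sigma^2_\delta]}_i\mathbf{He}^{[\sigma^2_\delta]}_j$ is a polynomial of degree $i+j\le a+b\le 4d$. Assumption (vii) only guarantees that moments up to order $2d$ coincide with Gaussian ones, so this step needs care.

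If $a+b\le 2d$, then (vii) together with the finiteness of moments from (ii) lets us replace $\delta$ by $N\sim\mathcal N(0,\sigma_\delta^2)$. The orthogonality relation $\mathbf{E}[\mathbf{He}^{[\sigma^2]}_i(N)\mathbf{He}^{[\sigma^2]}_j(N)]=\mathbb 1_{i=j}\,i!\,\sigma^{2i}$ then gives
\[
\sum_{l}\binom{a}{l}\binom{b}{l}l!\,\sigma_\delta^{2l}x^{a+b-2l},
\]
which is exactly the claimed formula.

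\textbf{Step 3 (main obstacle): the case $a+b>2d$.} Here the moments of $\delta$ of orders $2d+1,\dots,a+b$ enter, and (vii) does not control them. My first attempt is to read the lemma as using (vii) in the strengthened form where the moments agree up to the order actually needed, up to $4d$. This is consistent with how the lemma will feed into the asymptotic covariance computations, which require fourth-order quantities in $w$ and hence moments up to order $4d$ anyway.

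If only order $2d$ agreement is intended, I would check whether the high-order terms cancel. That does not seem to happen in general: for $a=b=2d$ the coefficient of $x^0$ is $\mathbf{E}[\mathbf{He}_{2d}(\delta)^2]$, which depends on $\mathbf{E}\delta^{4d}$. So the proof would proceed under moment matching up to order $a+b$, and I would note this restriction explicitly at that point.
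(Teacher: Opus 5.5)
This is the paper's own argument. The paper writes $t_m(x+\delta)=\sum_{j=0}^{m}\binom{m}{j}x^{m-j}t_j(\delta)$ via Lemma~\ref{lemma:t_are_hermite_polynomials} and the Hermite translation property, multiplies the two expansions, and concludes from the independence of $x$ and $\delta$ together with $\mathbf{E}\,t_i(\delta)t_j(\delta)=i!\,\sigma_\delta^{2i}$ for $i=j$ and $0$ otherwise.

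Your Step~3 caveat is correct, and it exposes a gap in the paper rather than in your proof. The paper credits that orthogonality to (vii), but (vii) fixes the moments of $\delta$ only up to order $2d$, while $i+j$ reaches $a+b\le 4d$, and the extra terms do not cancel. Already for $d=1$, where (vii) is vacuous given (ii), expanding $t_1(w)=x+\delta$ and $t_2(w)=(x+\delta)^2-\sigma_\delta^2$ gives
\[
\mathbf{E}[t_1(w)t_2(w)\mid x]=x^3+2\sigma_\delta^2x+\mathbf{E}\delta^3
\]
and
\[
\mathbf{E}[t_2(w)^2\mid x]=x^4+4\sigma_\delta^2x^2+4x\,\mathbf{E}\delta^3+\mathbf{E}\delta^4-\sigma_\delta^4 .
\]
So any skewed $\delta$ violates the claimed value $x^3+2\sigma_\delta^2x$. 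The symmetric choice $\delta=\pm\sigma_\delta$ with probability $1/2$ each violates the claimed value $x^4+4\sigma_\delta^2x^2+2\sigma_\delta^4$.

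Thus your Step~2 proves the lemma as stated for $a+b\le 2d$. For larger $a+b$ the lemma needs exactly the hypothesis you propose: Gaussian moments of $\delta$ up to order $a+b$, i.e.\ up to $4d$ for the full range. This is a genuinely stronger assumption. It is not supplied by the finiteness of moments in (v), so it should be stated as an explicit added hypothesis rather than as a reading of (vii).
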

\begin{proof}
    By \hyperref[lemma:t_are_hermite_polynomials]{Lemma \ref*{lemma:t_are_hermite_polynomials}} and the binomial translation property of Hermite polynomials \cite{Szegő} we have
$$t_m(w)=t_m(x+\delta)=\sum_{j=0}^{m}\binom{m}{j}x^{m-j}t_j(\delta).$$
Therefore,  $$t_a(w)t_r(w)=\left(\sum_{i=0}^{a}\binom{a}{i}x^{a-i}t_i(\delta)\right)\left(\sum_{j=0}^{b}\binom{b}{j}x^{b-j}t_j(\delta)\right).$$

    \noindent Note that due to the orthogonality of Hermite polynomials and assumption (vii) we have

    $$\mathbf{E}~ t_i(\delta)t_j(\delta) = \begin{cases}
        0, & \text{if $i \neq j$,} \\
        i!\sigma_\delta^{2i}, & \text{if $i=j$,}
    \end{cases}$$

    \noindent and by independence of $\delta$ and $x$ we conclude that the desired identity  holds.
\end{proof}

We number the rows and columns of matrix $U^{-\mathrm{T}}H_x^{-1}$ as $0,1,\ldots, d$ in a standard way. Let $R_{i}$ denote the row of $U^{-\mathrm{T}}H_x^{-1}, i=0,1,\ldots, d.$ By $e_k$  denote the $k$th standard basis vector (its coordinates are indexed by numbers $0,1,\ldots, d$). The following proposition provides the algebraic criterion for asymptotic independence of the estimators $\widehat{\beta}_i$ and $\widehat{\mu}_x^{(j)}$.

\begin{proposition}\label{proposition:characterization_of_independence_of_beta_and_mu}
    Suppose that  conditions (i)–(v) and (vii) hold and fix $1\leq i \leq d$,  $1\leq j \leq 2d$. Then the estimators $\widehat{\beta}_i$ and $\widehat{\mu}_x^{(j)}$ are asymptotically independent  if and only if
    $$R_i \cdot \left[\left(\mathbf{E}p^{(j)}_{a,b}(x)\right)_{0\leq a,b \leq d}\right]=0,$$
    \noindent where 
    $$p^{(j)}_{a,b}(x):= \sum_{k=1}^{j} k! \binom{j}{k}\left[\binom{a+b}{k} - \binom{a}{k}\right]\sigma^{2k}_\delta x^{a+b+j-2k}$$

    \noindent with  standard convention $\binom{m}{s} = 0$ for $s > m$.
\end{proposition}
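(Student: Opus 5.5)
Since $\widehat m^{(2d)}_x$ and $\widehat\beta$ are both smooth functions of sample means, I would compute the relevant off-diagonal block of the ACM directly from the sandwich formula $\Sigma=V^{-1}CV^{-\mathrm T}$. Here $H_x$ is presumably $\mathbf{E}\,T(w)=(\mathbf{E}x^{a+b})_{0\le a,b\le d}$.

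\textbf{Step 1: reduce to a covariance.} $V$ is block lower triangular with identity in the $m$-block, and the $\beta$-block does not involve $\sigma^2_\varepsilon$. Hence the $\beta$-rows of $V^{-1}$ are $(\mathcal O,\,(H_xU^{\mathrm T})^{-1},\,\mathcal O)$, and the $m$-rows are $(I,\mathcal O,\mathcal O)$. Therefore the $(\beta,m)$ block of $\Sigma$ equals
\[
U^{-\mathrm T}H_x^{-1}\,\mathbf{E}\bigl[S^{(\beta)}_{CS}S^{(m)\mathrm T}_{CS}\bigr].
\]
The $(i,j)$ entry is $R_i\cdot c_j$, where $c_j$ is the vector with components
\[
\mathbf{E}\bigl[S^{(\beta)}_{CS,a}\,(\mu^{(j)}_x-t_j(w))\bigr],\qquad a=0,\ldots,d.
\]
Since $\mathbf{E}S^{(\beta)}_{CS}=0$, this reduces to $-\mathbf{E}[S^{(\beta)}_{CS,a}t_j(w)]$. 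The criterion is then $R_i c_j=0$ for all parameter values, and it remains to show $c_j$ equals, up to sign, the vector displayed in the proposition.

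\textbf{Step 2: compute $\mathbf{E}[S^{(\beta)}_{CS,a}t_j(w)]$.} Write
\[
S^{(\beta)}_{CS,a}=\sum_b t_{a+b}(w)\beta^{(x)}_b-y\,t_a(w),\qquad \beta^{(x)}=U^{\mathrm T}\beta .
\]
Condition on $x$ and use Lemma~\ref{lemma:inner_product_of_t_given_x}, valid under (vii), for both pieces.
\begin{itemize}
\item For the first piece, $\mathbf{E}[t_{a+b}t_j\mid x]$ is given by the lemma.
\item For the second piece, $y$ is independent of $\delta$ given $(x,u)$, and $\mathbf{E}[y\mid x]=\sum_b\beta^{(x)}_bx^b$ because $u$ is independent of $x$. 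So $\mathbf{E}[y\,t_at_j]=\sum_b\beta^{(x)}_b\,\mathbf{E}[x^b\,\mathbf{E}[t_at_j\mid x]]$, where the inner expectation again comes from the lemma.
\end{itemize}
Subtracting, the $l=0$ terms cancel because both equal $x^{a+b+j}$. What remains is
\[
\sum_b\beta^{(x)}_b\,\mathbf{E}p^{(j)}_{a,b}(x),
\]
with the bracket $\binom{a+b}{k}-\binom{a}{k}$ arising exactly from the two applications of the lemma. Thus $c_j=-P^{(j)}\beta^{(x)}$, where $P^{(j)}=(\mathbf{E}p^{(j)}_{a,b}(x))_{a,b}$.

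\textbf{Step 3: pass to the stated criterion.} Asymptotic independence means $R_iP^{(j)}U^{\mathrm T}\beta=0$ for all admissible $\beta$ and the other parameters. The matrices $R_i$ and $P^{(j)}$ do not depend on $\beta$, and $U$ is invertible, so $U^{\mathrm T}\beta$ ranges over all of $\mathbf R^{d+1}$. Hence the condition is equivalent to $R_iP^{(j)}=0$. Nonsingularity of $\Sigma$, required by the definition, follows from the Theorem under its conditions.

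\textbf{Main obstacle.} The delicate step is Step 2. The index bookkeeping has to produce exactly $\binom{a+b}{k}-\binom{a}{k}$ with the correct power $x^{a+b+j-2k}$. One must also handle the $y$ term carefully: the Berkson error enters only through $\beta^{(x)}$, and $\varepsilon$ drops out by independence and zero mean. I would first check these points in the case $d=1$, $j=1$.
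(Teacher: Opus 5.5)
Your proposal is correct and follows essentially the same route as the paper. The sandwich formula reduces the entry to $-R_i\,\mathbf{E}[S^{(\beta)}_{CS}\,t_j(w)]$, and $y$ is replaced by $\sum_b\beta^{(x)}_b x^b$ by conditioning. Lemma~\ref{lemma:inner_product_of_t_given_x} then produces $\mathbf{E}p^{(j)}_{a,b}(x)$ once the $l=0$ terms cancel. Finally, since $U^{\mathrm T}\beta$ ranges over all of $\mathbf{R}^{d+1}$, you get the criterion; the paper just does this ``for all $\beta$'' step before applying the lemma rather than after.

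One minor caveat: the Theorem gives nonsingularity of $\Sigma$ only under the additional condition (vi), which is not among the proposition's hypotheses, so your closing remark needs that assumption. The paper's proof is silent on this point.
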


\begin{proof}
     The asymptotic covariance of the estimators $\widehat{\beta}_i$ and $\widehat{\mu_x}^{(j)}$ is 

    \begin{equation*}
        \begin{split}
            \mathbf{Acov}\left[\widehat{\beta}_i, \widehat{\mu}_x^{(j)}\right] & = R_i\cdot \mathbf{E}\left[\left\{T(w)U^\mathrm{T}\beta-y(t_0(w), t_1(w), \ldots, t_d(w))^\mathrm{T}\right\}\cdot (\mu^{(j)}_x - t_j(w))\right] = \\
            & = -R_i \cdot \mathbf{E}\left[\left\{T(w)U^\mathrm{T}\beta-y(t_0(w), t_1(w), \ldots, t_d(w))^\mathrm{T}\right\}\cdot t_j(w)\right] = \\
            & = -R_i \cdot \mathbf{E}\left[t_j(w)\cdot \left\{T(w) \beta_x-(t_0(w), \ldots, t_d(w))^\mathrm{T} (1,x,\ldots, x^d)\beta_x\right\}\right] = \\
            & = -R_i\cdot \mathbf{E}\left[t_j(w)\cdot \left(t_{a+b}(w) - t_a(w)x^b\right)_{0\leq a,b  \leq d}\right] \beta_x= \\ 
            & = -R_i\cdot \mathbf{E}\left[t_j(w)\cdot \left(t_{a+b}(w) - t_a(w)x^b\right)_{0\leq a,b \leq d}\right] U^{\mathrm{T}}\beta.
        \end{split}
    \end{equation*}

    \noindent We see that $\mathbf{Acov}\left[\widehat{\beta}_i, \widehat{\mu}_x^{(j)}\right] = 0$ for all values of $\beta$ if and only if 

    $$R_i\cdot \mathbf{E}\left[t_j(w)\cdot \left(t_{a+b}(w) - t_a(w)x^b\right)_{0\leq a,b \leq d}\right] = 0,$$

    \noindent and due to \hyperref[lemma:inner_product_of_t_given_x]{Lemma \ref*{lemma:inner_product_of_t_given_x}} we have
    \begin{gather*}
        \mathbf{E}\left[t_j(w)\cdot \left(t_{a+b}(w) - t_a(w)x^b\right)\right] = \\
        =\mathbf{E}\left[\sum_{k=0}^{\min\{j, a+b\}}k!\binom{j}{k}\binom{a+b}{k}\sigma_\delta^{2k}x^{a+b+j-2k} - \sum_{k=0}^{\min\{j, a\}}k!\binom{j}{k}\binom{a}{k}\sigma_\delta^{2k}x^{a+b+j-2k}\right] = \\
        = \mathbf{E}\left[\sum_{k=1}^{\min\{j, a+b\}}k!\binom{j}{k}\binom{a+b}{k}\sigma_\delta^{2k}x^{a+b+j-2k} - \sum_{k=1}^{\min\{j, a\}}k!\binom{j}{k}\binom{a}{k}\sigma_\delta^{2k}x^{a+b+j-2k}\right] = \\
        = \mathbf{E}~p^{(j)}_{a,b}(x),
    \end{gather*}
    \noindent which accomplishes the proof. 
\end{proof}
\begin{corollary}\label{corollary:independence_of_mu_x_i_and_beta_j}
        Suppose that  conditions (i)–(v), (vii) hold and in addition  $\delta=0.$  Then for every $1 \leq i \leq d$, $1\leq j \leq 2d,$ the estimators $\widehat{\beta}_i$ and $\widehat{\mu}_x^{(j)}$ are asymptotically independent. 
    \end{corollary}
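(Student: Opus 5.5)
The plan is to apply Proposition \ref{proposition:characterization_of_independence_of_beta_and_mu} directly. When $\delta=0$ we have $\sigma^2_\delta=0$, and condition (vii) holds trivially because all moments of $\delta$ vanish, so they agree with those of the degenerate law $\mathcal{N}(0,0)$. Thus the hypotheses of the proposition are met, and it suffices to check that
$$R_i \cdot \left[\left(\mathbf{E}p^{(j)}_{a,b}(x)\right)_{0\leq a,b \leq d}\right]=0.$$

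The key observation is that every summand of $p^{(j)}_{a,b}(x)$ carries the factor $\sigma_\delta^{2k}$ with $k\ge 1$. With $\sigma_\delta^2=0$, each such factor is zero, so $p^{(j)}_{a,b}(x)\equiv 0$ for all $0\le a,b\le d$ and $1\le j\le 2d$. The matrix $\left(\mathbf{E}p^{(j)}_{a,b}(x)\right)$ is therefore the zero matrix, and the criterion holds trivially for every row $R_i$.

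The only point needing care is a degenerate one. The derivation of $p^{(j)}_{a,b}$ in the proposition goes through Lemma \ref{lemma:inner_product_of_t_given_x}, and we must confirm it remains valid when $\sigma_\delta=0$, where the orthogonality argument for Hermite polynomials reads $0^0$ conventions literally. I would verify this directly. If $\delta=0$, then $t_r(w)=w^r=x^r$, since the recursion subtracts only terms with $\mathbf{E}\delta^{r-i}=0$. This matches the convention $\mathbf{He}^{[0]}_k(z)=z^k$. It follows that $\mathbf{E}[t_a(w)t_b(w)\mid x]=x^{a+b}$, which is exactly the $l=0$ term of the lemma's formula, all terms with $l\ge1$ being zero.

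As an alternative cross-check that bypasses the proposition, we could use its first display. There the asymptotic covariance equals $-R_i\,\mathbf{E}[t_j(w)(t_{a+b}(w)-t_a(w)x^b)]\,U^{\mathrm{T}}\beta$. With $w=x$ we get $t_{a+b}(w)-t_a(w)x^b=x^{a+b}-x^{a+b}=0$, so the covariance vanishes for all parameter values. Nonsingularity of $\Sigma$, as required by the definition of asymptotic independence, is not claimed here: assumption (vi) fails when $\delta=0$. So I would present the conclusion as the vanishing of the $(\widehat\beta_i,\widehat\mu_x^{(j)})$ entry of the ACM, in the sense used by Proposition \ref{proposition:characterization_of_independence_of_beta_and_mu}. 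This definitional subtlety is the main thing to address explicitly.
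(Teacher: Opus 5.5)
Your proposal is correct and follows the paper's proof, which likewise just invokes Proposition~\ref{proposition:characterization_of_independence_of_beta_and_mu} and notes that $p^{(j)}_{a,b}\equiv 0$ because every summand carries a factor $\sigma_\delta^{2k}$ with $k\ge 1$; your extra checks (the $\sigma_\delta=0$ case of Lemma~\ref{lemma:inner_product_of_t_given_x}, and the direct identity $t_{a+b}(x)-t_a(x)x^b=0$ in the covariance formula) are sound but not needed. Your caveat about nonsingularity is justified and is something the paper passes over: with $\delta=0$ the ACM is actually singular whenever $x$ is supported on between $d+1$ and $2d$ points (then $x,x^2,\ldots,x^{2d}$ satisfy a nontrivial affine relation), so stating the conclusion as the vanishing of the $(\widehat\beta_i,\widehat\mu_x^{(j)})$ entry of the ACM is the accurate formulation.
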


    \begin{proof}
        The statement follows directly from the \hyperref[proposition:characterization_of_independence_of_beta_and_mu]{Proposition \ref*{proposition:characterization_of_independence_of_beta_and_mu}}, since now $p^{(j)}_{a,b}(x)=0$ for all $1\leq j\leq 2d$.
    \end{proof}

    \begin{theorem}\label{theorem:asymptotic_independence_of_beta_d_and_mu_1_x}
        Suppose that  conditions (i)–(v), (vii) hold and $\sigma^2_\delta > 0$. Then the estimators $\widehat{\beta}_i$ and $\widehat{\mu}_x^{(1)}$ are asymptotically independent  if and only if $i=d$.
    \end{theorem}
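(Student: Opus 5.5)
The plan is to specialize Proposition~\ref{proposition:characterization_of_independence_of_beta_and_mu} to $j=1$ and reduce the criterion to a statement about a single row of a triangular matrix. Throughout I take $H_x=(\mathbf{E}x^{a+b})_{0\le a,b\le d}=\mathbf{E}\,T(w)$; this matrix is nonsingular by condition (iv).

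\emph{Step 1: compute the polynomials.} For $j=1$ only the term $k=1$ appears, so
$$p^{(1)}_{a,b}(x)=\big[(a+b)-a\big]\sigma_\delta^2x^{a+b-1}=b\,\sigma_\delta^2x^{a+b-1}.$$
When $b=0$ the coefficient vanishes, so negative powers never occur.

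\emph{Step 2: factor the matrix.} Let $M=(\mathbf{E}p^{(1)}_{a,b}(x))_{0\le a,b\le d}$. The column $b$ of $M$ equals $b\,\sigma_\delta^2$ times the column $b-1$ of $H_x$, and the column $0$ of $M$ is zero. Hence $M=\sigma_\delta^2 H_xJ$, where $J$ is the $(d+1)\times(d+1)$ "differentiation" matrix defined by $Je_b=b\,e_{b-1}$ for $b\ge1$ and $Je_0=0$. Since $R_i=e_i^{\mathrm T}U^{-\mathrm T}H_x^{-1}$, we obtain
$$R_iM=\sigma_\delta^2\,e_i^{\mathrm T}U^{-\mathrm T}J=\sigma_\delta^2\,v^{\mathrm T}J,$$
where $v^{\mathrm T}$ is the $i$th row of $U^{-\mathrm T}$. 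By $\sigma_\delta^2>0$, the criterion of Proposition~\ref{proposition:characterization_of_independence_of_beta_and_mu} becomes $v^{\mathrm T}J=0$. Componentwise this reads $(v^{\mathrm T}J)_b=b\,v_{b-1}$ for $b\ge1$, so the condition is equivalent to $v_0=\dots=v_{d-1}=0$.

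\emph{Step 3: use the triangular structure of $U$.} The matrix $U$ is lower triangular with unit diagonal, because $\binom{i}{i}\mathbf{E}u^0=1$. Therefore $U^{-1}$ is also lower unitriangular, and $U^{-\mathrm T}$ is upper unitriangular. Consequently $v_k=0$ for $k<i$ and $v_i=1$.

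If $i=d$, then the only possibly nonzero entry of $v$ is $v_d$. So $v_0=\dots=v_{d-1}=0$, the criterion holds for all parameter values, and the estimators are asymptotically independent.

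If $1\le i\le d-1$, then $v_i=1$ with $i\le d-1$, so $(v^{\mathrm T}J)_{i+1}=(i+1)\ne0$. The asymptotic covariance is thus nonzero for some $\beta$ (indeed for generic $\beta$), since the proof of Proposition~\ref{proposition:characterization_of_independence_of_beta_and_mu} shows that it is a nonzero row vector times $U^{\mathrm T}\beta$ and $U^{\mathrm T}$ is invertible. Hence independence fails.

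The main point requiring care is Step 2: recognizing the factorization $M=\sigma_\delta^2H_xJ$, which makes $H_x^{-1}$ cancel, and checking the indexing conventions for $J$ and for the rows of $U^{-\mathrm T}$. After that, everything is read off from triangularity.
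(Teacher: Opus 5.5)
Your proof is correct and follows the paper's argument. The paper runs the same computation through the polynomials $R_i(x)=R_i\cdot(1,x,\ldots,x^d)^{\mathrm{T}}$ and the identity $\mathbf{E}[R_i(x)(1,x,\ldots,x^d)]=e_i^{\mathrm{T}}U^{-\mathrm{T}}$, which is exactly your cancellation $R_iM=\sigma_\delta^2e_i^{\mathrm{T}}U^{-\mathrm{T}}J$. You also make explicit the unitriangularity of $U^{-\mathrm{T}}$, which the paper's final ``evidently'' relies on.
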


    \begin{proof}

    We have  
$$\mathbf{E}p^{(1)}_{a,b}(x)=\mathbf{E}\left[(a+b)\sigma_\delta^{2}\cdot x^{a+b-1} -a\sigma_\delta^{2}\cdot x^{a+b-1}\right] = \sigma^2_\delta\cdot \mathbf{E}[bx^{a+b-1}].$$   By the definition of inverse matrix 
    $$R_i\cdot[ (\mathbf{E}x^{a+b})_{0\leq a,b \leq d}] \cdot U^\mathrm{T} = e^\mathrm{T}_i,$$

    \noindent Thus, using the  polynomials $R_i(x) := R_i \cdot (1,x,\ldots, x^d)^\mathrm{T}$, $1 \leq i\leq d,$ we observe that 
\begin{equation}\label{eq:orthogonality_of_R_and monomials}
        \mathbf{E}[R_i(x)\cdot(1,x,x^2, \ldots, x^d)]=e^\mathrm{T}_iU^{-\mathrm{T}}.
    \end{equation}

    According to  \hyperref[proposition:characterization_of_independence_of_beta_and_mu]{Proposition \ref*{proposition:characterization_of_independence_of_beta_and_mu}} the estimators $\widehat{\beta}_i$ and $\widehat{\mu_x}^{(1)}$, $1\leq i \leq d,$ are asymptotically independent if and only if
    $$R_i\cdot \left(\mathbf{E}\left[bx^{a+b-1}\right]\right)_{0\leq a,b\leq d} = 0,$$

    \noindent which is equivalent to 
$$\mathbf{E}\left(0, R_i(x), \ldots, R_{i}(x)\cdot ix^{i-1}, \ldots, R_{i}(x)\cdot dx^{b-1}\right) = 0.$$
Evidently, due to  \eqref{eq:orthogonality_of_R_and monomials} the latter identity holds if and only if $i=d$.
\end{proof}

\begin{lemma}\label{lemma:expected_value_w_times_xi_polynomials}
    Suppose that  conditions (i), (ii), and (vii) hold true. Then for all $0\leq r \leq2d$ it holds
    $$\mathbf{E}[w\cdot (g_r(w)-\xi^r)]=\sigma^2_\delta\mathbf{E}[r\xi^{r-1}].$$
\end{lemma}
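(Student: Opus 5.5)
The plan is to reduce the claim to a statement about $t_j(w)$ alone and then use the Hermite structure from Lemma~\ref{lemma:t_are_hermite_polynomials}. By construction,
$$g_r(w)=\sum_{j=0}^{r}\binom{r}{j}\mathbf{E}[u^{r-j}]\, t_j(w),$$
while $\xi^r=(x+u)^r=\sum_{j=0}^{r}\binom{r}{j}u^{r-j}x^j$. We condition on $x$ and use the independence of $u$ from $(x,\delta)$ given by (i). This gives
$$\mathbf{E}[w\,\xi^r]=\sum_{j=0}^{r}\binom{r}{j}\mathbf{E}[u^{r-j}]\,\mathbf{E}[w x^j],$$
and similarly $\mathbf{E}[w\, g_r(w)]=\sum_{j}\binom{r}{j}\mathbf{E}[u^{r-j}]\,\mathbf{E}[w\, t_j(w)]$. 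So it suffices to prove, for each $0\le j\le 2d$,
$$\mathbf{E}[w\,(t_j(w)-x^j)]=\sigma^2_\delta\, j\,\mathbf{E}x^{j-1}.$$
Once this holds, we recombine the terms:
$$\sigma^2_\delta\sum_{j}\binom{r}{j}\mathbf{E}[u^{r-j}]\, j\,\mathbf{E}x^{j-1}=\sigma^2_\delta\, r\sum_{j\ge1}\binom{r-1}{j-1}\mathbf{E}[u^{r-j}]\,\mathbf{E}x^{j-1}=\sigma^2_\delta\, r\,\mathbf{E}\xi^{r-1},$$
using $j\binom{r}{j}=r\binom{r-1}{j-1}$ and the binomial expansion of $(x+u)^{r-1}$.

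For the reduced identity, note that $w=t_1(w)$. Lemma~\ref{lemma:inner_product_of_t_given_x} with $a=1$, $b=j$ gives
$$\mathbf{E}[t_1(w)t_j(w)\mid x]=x^{j+1}+j\sigma^2_\delta x^{j-1}.$$
Also, since $\delta$ is centered and independent of $x$, we have $\mathbf{E}[w x^j\mid x]=x^{j+1}$. Subtracting and taking expectations yields the claim. This is directly valid whenever $\sigma^2_\delta>0$. In the degenerate case $\sigma^2_\delta=0$, we have $\delta=0$, hence $t_j(w)=x^j$ and both sides vanish.

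The main point to check is that the lemma is applied within its moment range. The index sum $1+j\le 2d+1$ involves $\delta$-moments of order up to $j+1\le 2d+1$, which (vii) does not cover. To avoid this, I would instead use the recursion/Hermite relation $z\,\mathbf{He}_j^{[\sigma^2]}(z)=\mathbf{He}_{j+1}^{[\sigma^2]}(z)+j\sigma^2\mathbf{He}_{j-1}^{[\sigma^2]}(z)$. Then $w t_j(w)=t_{j+1}(w)+j\sigma^2_\delta t_{j-1}(w)$ as polynomials, and the conditional expectation of $t_{j+1}(w)$ is handled by the binomial translation used in the proof of Lemma~\ref{lemma:inner_product_of_t_given_x}, together with $\mathbf{E}t_k(\delta)=0$ for $k\ge1$. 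The moments of $\delta$ needed beyond order $2d$ only enter through the centered Hermite terms, and the relevant moment conditions would need to be verified there. If $r\le 2d-1$ this causes no difficulty at all, and I would flag the edge case $r=2d$ as the one requiring care.

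Integrability of all products is guaranteed by (ii), which allows Fubini and conditioning throughout.
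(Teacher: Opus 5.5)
Your route is the paper's route. You expand $g_r$ and $\xi^r$ binomially, use independence of $u$ from $(x,\delta)$, apply Lemma~\ref{lemma:inner_product_of_t_given_x} with $a=1$ termwise, and recombine; your identity $j\binom{r}{j}=r\binom{r-1}{j-1}$ is exactly the paper's step $\mathbf{E}\bigl[\frac{d}{dx}(x+u)^r\bigr]=r\,\mathbf{E}\xi^{r-1}$. For $0\le r\le 2d-1$ your argument is complete: every $\delta$-moment involved has order at most $2d$, so (vii) gives the needed orthogonality and (ii) gives integrability.

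Your worry about $r=2d$ is justified, but no amount of care will close it, because under (i), (ii), (vii) alone the identity is false at $r=2d$. Your Hermite-recursion route breaks at exactly the step you left open. We have $w\,t_{2d}(w)=\mathbf{He}^{[\sigma^2_\delta]}_{2d+1}(w)+2d\sigma^2_\delta\, t_{2d-1}(w)$. The lower-order terms of $\mathbf{He}_{2d+1}$ are odd powers of degree at most $2d-1$, and their expectations vanish by (vii), so $\mathbf{E}\,\mathbf{He}^{[\sigma^2_\delta]}_{2d+1}(\delta)=\mathbf{E}\delta^{2d+1}$, not $0$. Hence
\begin{gather*}
\mathbf{E}[w\,t_{2d}(w)\mid x]=x^{2d+1}+2d\sigma^2_\delta x^{2d-1}+\mathbf{E}\delta^{2d+1},\\
\mathbf{E}[w(g_{2d}(w)-\xi^{2d})]=2d\sigma^2_\delta\,\mathbf{E}\xi^{2d-1}+\mathbf{E}\delta^{2d+1}.
\end{gather*}
For $d=1$, $u=0$ this reads $\mathbf{E}[w(w^2-\sigma^2_\delta-x^2)]=2\sigma^2_\delta\mathbf{E}x+\mathbf{E}\delta^3$. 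Since (vii) is automatic when $d=1$, any skewed $\delta$ (e.g.\ a centered exponential) is a counterexample. Also, (ii) does not guarantee that $\mathbf{E}[w\xi^{2d}]$ is finite, so your integrability remark fails there too.

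The paper's proof has the same hole: it applies Lemma~\ref{lemma:inner_product_of_t_given_x} with $a=1$, $b=2d$, but that lemma's proof only justifies $a+b\le 2d$. The case $r=2d$ needs an extra hypothesis such as $\mathbf{E}|\delta|^{2d+1}+\mathbf{E}|x|^{2d+1}<\infty$ and $\mathbf{E}\delta^{2d+1}=0$ (e.g.\ symmetric $\delta$). This matters downstream, since Theorem~\ref{theorem:asymptotic_independence_of_mu_1_x_and_epsilon_variance} uses the lemma with $r=a+b=2d$. In the linear model with $u=0$, a direct delta-method computation gives $\mathbf{Acov}[\widehat{\mu}^{(1)}_x,\widehat{\sigma}^2_\varepsilon]=\beta_1^2\,\mathbf{E}\delta^3$.
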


\begin{proof}
    We have $$g_r(w)=\sum_{l=0}^{r}\binom{r}{l}\mathbf{E}[u^{r-l}]t_l(w)$$
    \noindent and
    $$\xi^r = \sum_{l=0}^{r}\binom{r}{l}u^{r-l}x^l.$$

    \noindent Since $w=t_1(w)$,  \hyperref[lemma:inner_product_of_t_given_x]{Lemma \ref*{lemma:inner_product_of_t_given_x}} implies that
    $$\mathbf{E}[wg_r(w)]=\sum_{l=0}^{r}\binom{r}{l}\mathbf{E}[u^{r-l}]\cdot x^{1+l} + \sigma^2_\delta\sum_{l=0}^{r}l\binom{r}{l}\mathbf{E}u^{r-l}\cdot\mathbf{E}x^{l-1}.$$

    \noindent In addition, since $\delta$ has zero   mean and independent of $\xi$ we have
    $$\mathbf{E}w\xi^r=\mathbf{E}x\xi^r=\sum_{l=0}^{r}\binom{r}{l}\mathbf{E}u^{r-l}\cdot\mathbf{E}x^{l+1}.$$
    \noindent Therefore,
    \begin{gather*}
        \mathbf{E}[w(g_r(w)-\xi^r)]=\sigma^2_\delta\sum_{l=0}^{r}l\binom{r}{l}\mathbf{E}u^{r-l}\cdot \mathbf{E}x^{l-1} = \sigma^2_\delta \mathbf{E}\left[\frac{d}{dx}\sum_{l=0}^{r}\binom{r}{l}u^{r-l}x^l\right] = \\
        = \sigma^2_\delta \mathbf{E}\left[\frac{d}{dx}(x+u)^{r}\right] = \sigma^2_\delta r\cdot\mathbf{E}(x+u)^{r-1}\ = \sigma^2_\delta r\cdot\mathbf{E}\xi^{r-1}.
    \end{gather*}

\end{proof}

\begin{theorem}\label{theorem:asymptotic_independence_of_mu_1_x_and_epsilon_variance}
    Suppose that  conditions (i)–(v) and (vii) hold. Then  the estimators $\widehat{\mu}^{(1)}_x$ and $\widehat{\sigma}^2_\varepsilon$ are asymptotically independent.
\end{theorem}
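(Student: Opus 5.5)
The plan is to compute the $(1,\,3d+2)$ entry of the sandwich matrix $\Sigma=V^{-1}CV^{-\mathrm T}$ from the asymptotic normality theorem above, and to show that it vanishes for all parameter values.

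\emph{Step 1: influence functions.} Because $V$ is block lower triangular, the row of $V^{-1}$ corresponding to $\widehat\mu^{(1)}_x$ is simply $e_1^{\mathrm T}$. The last row of $V^{-1}$ is
\[
\bigl(0,\ -2\beta^{\mathrm T}\mathbf{E}G(w)\,(\mathbf{E}T(w)U^{\mathrm T})^{-1},\ 1\bigr).
\]
By construction $g_r=\sum_l\binom{r}{l}\mathbf{E}u^{r-l}t_l$, so $G(w)=U\,T(w)\,U^{\mathrm T}$. Writing $H_x=(\mathbf{E}x^{a+b})_{0\le a,b\le d}$, this row reduces to
\[
\bigl(0,\,-2\beta^{\mathrm T}U,\,1\bigr)=\bigl(0,\,-2\beta_x^{\mathrm T},\,1\bigr),
\]
where $\beta_x=U^{\mathrm T}\beta$. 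Using $\mathbf{E}S_{CS}=0$, the asymptotic covariance becomes
\[
\mathbf{Acov}\bigl[\widehat\mu^{(1)}_x,\widehat\sigma^2_\varepsilon\bigr]
=-\mathbf{E}\bigl[w\,S^{(\sigma^2_\varepsilon)}_{CS}\bigr]+2\beta_x^{\mathrm T}\,\mathbf{E}\bigl[w\,S^{(\beta)}_{CS}\bigr].
\]
It therefore suffices to compute these two mixed moments and check that they cancel.

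\emph{Step 2: the $\beta$-part.} This computation is already done in the proof of Proposition \ref{proposition:characterization_of_independence_of_beta_and_mu} and Theorem \ref{theorem:asymptotic_independence_of_beta_d_and_mu_1_x}, with $j=1$. There $w=t_1(w)$, and Lemma \ref{lemma:inner_product_of_t_given_x} is applied after conditioning on $x$; the $\varepsilon$-terms drop out since $\varepsilon$ is centered and independent of $w$. The result is
\[
\mathbf{E}[w\,S^{(\beta)}_{CS}]=\sigma^2_\delta M\beta_x,\qquad M=\bigl(b\,\mathbf{E}x^{a+b-1}\bigr)_{0\le a,b\le d}.
\]
Symmetrizing the quadratic form gives
\[
2\beta_x^{\mathrm T}\mathbf{E}[wS^{(\beta)}_{CS}]=\sigma^2_\delta\,\mathbf{E}\bigl[(f_x^2)'(x)\bigr],
\]
where $f_x(z)=\sum_a\beta^{(x)}_a z^a$.

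\emph{Step 3: the $\sigma^2_\varepsilon$-part.} Expand $y^2=f(\xi)^2+2\varepsilon f(\xi)+\varepsilon^2$ with $f(z)=\sum_a\beta_a z^a$. Since $\delta$ and $\varepsilon$ are centered and independent of the rest, $\mathbf{E}[w\varepsilon^2]=\sigma^2_\varepsilon\mathbf{E}x$, which cancels the term $\sigma^2_\varepsilon\mathbf{E}w$, while the cross term vanishes. For the remaining part, Lemma \ref{lemma:expected_value_w_times_xi_polynomials} gives $\mathbf{E}[w g_r(w)]=\mathbf{E}[w\xi^r]+\sigma^2_\delta r\,\mathbf{E}\xi^{r-1}$. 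Summing with weights $\beta_a\beta_b$, $r=a+b$, yields
\[
\mathbf{E}[wS^{(\sigma^2_\varepsilon)}_{CS}]=\sigma^2_\delta\,\mathbf{E}\bigl[(f^2)'(\xi)\bigr].
\]

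\emph{Step 4 and main obstacle.} It remains to verify that $\mathbf{E}[(f^2)'(\xi)]$ equals $\mathbf{E}[(f_x^2)'(x)]$. Note that $f_x(x)=\mathbf{E}[f(x+u)\mid x]$, so $f^2$ and $f_x^2$ differ by a conditional-variance term in $u$, and I expect this comparison to be the delicate point.

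I would first redo Steps 2–3 carefully using the full identity $G=UTU^{\mathrm T}$. The aim is to write $\mathbf{E}[w\,\beta^{\mathrm T}G(w)\beta]$ directly as $\beta_x^{\mathrm T}\,\mathbf{E}[t_1T]\,\beta_x$, i.e.\ in terms of $x$ only, rather than through Lemma \ref{lemma:expected_value_w_times_xi_polynomials}. In that form the two $\sigma^2_\delta$-contributions both appear as $\beta_x^{\mathrm T}M\beta_x$, and the cancellation should follow once the $u$-dependence of $y^2$ is tracked consistently.

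If a residual term $\sigma^2_\delta\mathbf{E}\bigl[(\mathbf{Var}[f(x+u)\mid x])'\bigr]$ survives, I would check whether it is absorbed by the difference between $\mathbf{E}[x\,f(\xi)^2]$ and the corresponding expression $\beta^{\mathrm T}U\,\mathbf{E}[x\,T]\,U^{\mathrm T}\beta$ coming from $G$. Tracking these two expansions term by term in $u$ is where I expect the real work to lie.
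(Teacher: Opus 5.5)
\emph{There is a genuine gap, in Step 1.} The identity $G(w)=U\,T(w)\,U^{\mathrm T}$ is false whenever $u$ is nondegenerate. By Vandermonde, $g_{a+b}=\sum_{i\le a,\,j\le b}\binom{a}{i}\binom{b}{j}\mathbf{E}[u^{a+b-i-j}]\,t_{i+j}(w)$. In $(UTU^{\mathrm T})_{ab}$ the factor $\mathbf{E}[u^{a+b-i-j}]$ is replaced by $\mathbf{E}[u^{a-i}]\,\mathbf{E}[u^{b-j}]$, so already $g_2-(UTU^{\mathrm T})_{11}=\sigma_u^2$. In expectation, $\mathbf{E}\,G(w)=H_\xi$, whereas $UH_xU^{\mathrm T}=\mathbf{E}\bigl[\mathbf{E}(\Xi\mid x)\,\mathbf{E}(\Xi\mid x)^{\mathrm T}\bigr]$ with $\Xi=(1,\xi,\ldots,\xi^d)^{\mathrm T}$. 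With your $G$, $S^{(\sigma^2_\varepsilon)}_{CS}$ would not even be unbiased. (The paper writes $\beta^{\mathrm T}UT(w)U^{\mathrm T}\beta$ once, in the nonsingularity argument, but that is a slip; the definition of $g_r$ does not give it.)

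Hence the last row of $V^{-1}$ is $(0,\mathrm{P},1)$ with $\mathrm{P}=-2\beta^{\mathrm T}H_\xi U^{-\mathrm T}H_x^{-1}$, not $(0,-2\beta_x^{\mathrm T},1)$. With your row, Step 4 would need $\mathbf{E}[(f^2)'(\xi)]=\mathbf{E}[(f_x^2)'(x)]$, which fails in general for $d\ge 2$. The difference is $\mathbf{E}\bigl[\frac{d}{dx}\mathbf{Var}(f(x+u)\mid x)\bigr]$; for $f(z)=\beta_2z^2$ it equals $\beta_2^2\bigl(8\sigma_u^2\,\mathbf{E}x+4\,\mathbf{E}u^3\bigr)$. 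Your fallback cannot absorb this. The quantity $\mathbf{E}[xf(\xi)^2]-\beta^{\mathrm T}U\,\mathbf{E}[xT(w)]\,U^{\mathrm T}\beta=\mathbf{E}[x\,\mathbf{Var}(f(\xi)\mid x)]$ carries no factor $\sigma_\delta^2$, and it never appears in the correct computation: it is exactly the bias created by replacing $G$ with $UTU^{\mathrm T}$.

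\emph{The missing step.} Your Steps 2 and 3 are correct; what is missing is how to combine Step 2 with the true $\mathrm{P}$. Write $M=H_xJ$, where $J$ is the differentiation matrix ($J_{k,k+1}=k+1$). Then $JU^{\mathrm T}=U^{\mathrm T}J$, because $U^{\mathrm T}$ acts on coefficient vectors as $p\mapsto\mathbf{E}\,p(\cdot+u)$, which commutes with $d/dz$. Therefore
\[
\mathrm{P}M\beta_x=-2\beta^{\mathrm T}H_\xi U^{-\mathrm T}JU^{\mathrm T}\beta=-2\beta^{\mathrm T}H_\xi J\beta=-2\,\mathbf{E}[f(\xi)f'(\xi)]=-\mathbf{E}[(f^2)'(\xi)],
\]
which cancels your Step 3 term exactly. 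Everything stays in $\xi$, and no conditional-variance term ever arises; this is precisely the paper's argument.

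\emph{A further caveat, shared with the paper.} Steps 2 and 3 both use $\mathbf{E}[t_1(w)t_{2d}(w)\mid x]=x^{2d+1}+2d\sigma_\delta^2x^{2d-1}$, coming from the $(d,d)$ entry of $T$ and from $g_{2d}$. This also requires $\mathbf{E}\delta^{2d+1}=0$, which (vii) does not give, since it stops at order $2d$. Without it a term proportional to $\mathbf{E}\delta^{2d+1}$ survives. For $d=1$ a direct delta-method computation gives $\mathbf{Acov}[\widehat\mu^{(1)}_x,\widehat\sigma^2_\varepsilon]=\beta_1^2(1+2\sigma_u^2/\sigma_x^2)\,\mathbf{E}\delta^3$, so the hypothesis $\mathbf{E}\delta^{2d+1}=0$ must be added.
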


\begin{proof}
Recall that the matrix $U$ was introduced in Subsection 2.2.
    By \\$(\mathrm{P}, 1)=(\mathrm{P_1}, \mathrm{P_2}, \ldots, \mathrm{P}_{d+1}, 1)$ denote the bottom row of the inverse matrix 
$$\begin{pmatrix}
        H_x U^\mathrm{T} & \mathcal{O} \\ 
        2\beta^\mathrm{T}H_\xi & 1
    \end{pmatrix}^{-1} = \begin{pmatrix}
                            U^{-\mathrm{T}}H^{-1}_x & \mathcal{O} \\
                            -2\beta^\mathrm{T}H_\xi U^{-\mathrm{T}}H_x^{-1} & 1
    
\end{pmatrix}$$

\noindent where $H_x = \left(\mathbf{E}[x^{i+j}]\right)_{0\leq i, j \leq d}$  and  $H_\xi = \left(\mathbf{E}[\xi^{i+j}]\right)_{0 \leq i, j \leq d}$ . Thus, $\mathrm{P} = -2\beta^\mathrm{T}H_\xi U^{-\mathrm{T}}H_x^{-1}$.

    Due to the Sandwich Formula the asymptotic covariance between the estimators $\widehat{\mu}^{(1)}_x$ and $\widehat{\sigma}^2_\varepsilon$ is given by the formula
\begin{equation*}
        \begin{split}
            \mathbf{Acov}\left[\widehat{\mu}^{(1)}_x, \widehat{\sigma^2_\varepsilon}\right] & = \mathrm{P}\cdot \mathbf{E}\left[S^{(\beta)}_{CS}\,(\mu^{(1)}_x-w)\right] + \mathbf{E}\left[S^{(\sigma^2_\varepsilon)}_{CS}\, (\mu^{(1)}_x-w)\right] = \\
            & = -\mathbf{E}\left[\mathrm{P} \cdot \left(T(w)U^\mathrm{T} \beta -y(t_0(w), \ldots, t_d(w))^\mathrm{T}\right)w +\right. \\ 
            & \ \ \ \ \ \ \ \ \ \ +\left. \left(\sigma^2_\varepsilon - y^2+\beta^\mathrm{T}G(w)\beta\right)w \right] = \\
            & = -\mathbf{E}\left[\mathrm{P} \cdot w(t_{a+b}(w) - t_a(w)x^b)_{0\leq a, b \leq d} \cdot U^{\mathrm{T}} + \right. \\ 
            & \ \ \ \ \ \ \ \ \ \ + \left. \beta^\mathrm{T} w\left(g_{a+b}(w)-\xi^{a+b}\right)_{0 \leq a, b \leq d}\right]\beta = \\
            & = -\sigma^2_\delta \mathbf{E}\left[\mathrm{P} \cdot (bx^{a+b-1})_{0\leq a, b \leq d}\cdot U^{\mathrm{T}} + \right. 
            \\ & \ \ \ \ \ \ \ \ \ \ + \left. \beta^\mathrm{T} \left((a+b)\xi^{a+b-1}\right)_{0 \leq a, b \leq d}\right]\beta \\
            & = -\sigma^2_\delta \mathbf{E}\left[-2\beta^\mathrm{T}H_\xi U^{-\mathrm{T}}H_x^{-1} \cdot (bx^{a+b-1})_{0\leq a, b \leq d}\cdot U^{\mathrm{T}} + \right. 
            \\ & \ \ \ \ \ \ \ \ \ \ + \left. \beta^\mathrm{T} \left((a+b)\xi^{a+b}\right)_{0 \leq a, b \leq d}\right]\beta = \\
            & = -\sigma^2_\delta \beta^\mathrm{T} \ \mathbf{E}\left[-2H_\xi U^{-\mathrm{T}}H_x^{-1} \cdot (bx^{a+b-1})_{0\leq a, b \leq d}\cdot U^\mathrm{T} + \right. 
            \\ & \ \ \ \ \ \ \ \ \ \ \ \ \ \ \ \ \ \ + \left. \beta^\mathrm{T} \left((a+b)\xi^{a+b-1}\right)_{0 \leq a, b \leq d}\right] \ \beta.
        \end{split}
    \end{equation*}

    \noindent Here, in computations \hyperref[lemma:inner_product_of_t_given_x]{Lemma \ref*{lemma:inner_product_of_t_given_x}} and \hyperref[lemma:expected_value_w_times_xi_polynomials]{Lemma \ref*{lemma:expected_value_w_times_xi_polynomials}} were used.

    Therefore, $\mathbf{Acov}\left[\widehat{\mu}^{(1)}_x, \widehat{\sigma^2_\varepsilon}\right]=0$ holds for all $\beta$ if and only if the following equality holds
    
\begin{equation}\label{eq:defining_expected_value_condition_for_as_indpendence_of_mu_1_var_epsilon}
        \begin{split}
            \mathbf{E}&\left[\left((a+b)\xi^{a+b-1}\right)_{0\leq a,b \leq d} + \right. \\ & + \left.\left((a+b)\xi^{a+b-1}\right)^{\mathrm{T}}_{0\leq a,b \leq d} - \right.\\ 
            & \left.- 2H_\xi U^{-\mathrm{T}}H_x^{-1} \cdot (bx^{a+b-1})_{0\leq a,b \leq d}\cdot U^\mathrm{T} - \right. \\
            & - \left. \left(2H_\xi U^{-\mathrm{T}}H_x^{-1} \cdot (bx^{a+b-1})_{0\leq a,b \leq d}\cdot U^\mathrm{T}\right)^\mathrm{T}\ \right] = 0,
        \end{split}
    \end{equation}

    \noindent which we are going to proveunder the assumptions of the theorem.

    \vspace{10pt}
    
    We have

    \begin{gather*}
        \mathbf{E}\left[(bx^{a+b-1})_{0\leq a, b \leq d}\cdot U^\mathrm{T}\right] = \mathbf{E}\left[(1,x,\ldots, x^{d})^\mathrm{T}\cdot(0, 1, 2x, \ldots, bx^{b-1})\cdot U^\mathrm{T}\right] = \\
        = \mathbf{E}\left[(1,x,\ldots, x^{d})^\mathrm{T}(0, 1, 2\xi, \ldots, b\xi^{b-1})\right]=\mathbf{E}\left[(bx^{a}\xi^{b-1})_{0\leq a,b \leq d}\right].
    \end{gather*}

\noindent Introduce a  matrix $J$,

$$J=\begin{pmatrix}
    0 & 1 & 0 & 0 & \cdots & 0 \\
    0 & 0 & 2 & 0 & \cdots & 0 \\
    0 & 0 & 0 & 3 & \cdots & 0 \\
    \vdots & \vdots & \vdots & \vdots & \ddots & \vdots \\
    0 & 0 & 0 & 0 & \cdots & d \\
    0 & 0 & 0 & 0 & \cdots & 0
\end{pmatrix}$$

\noindent and notice that

$$\mathbf{E}\left[(bx^{a+b-1})_{0\leq a, b \leq d}\cdot U^\mathrm{T}\right] = H_xU^{\mathrm{T}}J.$$

\noindent Therefore, it holds

$$\mathbf{E}\left[H^{-1}_x\cdot (bx^{a+b-1})_{0\leq a, b \leq d}\cdot U^\mathrm{T}\right] = U^{\mathrm{T}}J$$
\noindent and  we finally  deduce that
\begin{gather*}  \mathbf{E}\left[-2H_\xi U^{-\mathrm{T}}H_x^{-1} \cdot (bx^{a+b-1})_{0\leq a, b \leq d}\cdot U^\mathrm{T}\right] = -2H_\xi J  =-2\mathbf{E}\left[\left(b\xi^{a+b-1}\right)_{0\leq a,b \leq d}\right],
\end{gather*}
\begin{gather*}
\mathbf{E}\left[\left(-2H_\xi U^{-\mathrm{T}}H_x^{-1} \cdot (bx^{a+b-1})_{0\leq a, b \leq d}\cdot U^\mathrm{T}\right)^\mathrm{T}\ \right] =-2\mathbf{E}\left[\left(a\xi^{a+b-1}\right)_{0\leq a,b \leq d}\right].
\end{gather*}
\noindent Now it is easy to see that the equality \eqref{eq:defining_expected_value_condition_for_as_indpendence_of_mu_1_var_epsilon} indeed holds, which yields the asymptotic independence of the estimators $\widehat{\mu}^{(1)}_x$ and $\widehat{\sigma}^2_\varepsilon$.
\end{proof}

\textbf{Remark 1.} In the linear case $(d=1)$ \hyperref[theorem:asymptotic_independence_of_beta_d_and_mu_1_x]{Theorem \ref*{theorem:asymptotic_independence_of_beta_d_and_mu_1_x}} and \hyperref[theorem:asymptotic_independence_of_mu_1_x_and_epsilon_variance]{Theorem \ref*{theorem:asymptotic_independence_of_mu_1_x_and_epsilon_variance}} state the asymptotic independence under no restrictions on the distribution of $\delta$ and $x.$ This is a sharper result compared with \cite{Yakovliev_Kukush}, Theorem 3(b), where $\delta$ and $x$ had zero skewness.

\vspace{10pt}

In this subsection we have proven that under the additional condition (vii)  the estimators $\widehat{\mu}_x^{(1)}$ and $\widehat{\beta}_d$ are asymptotically independent as well as the estimators $\widehat{\mu}_x^{(1)}$ and $\widehat{\sigma^2_\varepsilon}$. Alas, one should not expect more pairs of asymptotically independent estimators among the estimators of model parameters.  To illustrate this phenomenon, we compute the ACM explicitly for the quadratic model ($d=2$) with  $\beta_0 = -1$, $\beta_1=2$, $\beta_2=1$, $x \sim \mathcal{N}(1,4)$, $\delta \sim \mathcal{N}(0, 0.25)$, $u \sim \mathcal{N}(0,0.25)$, $\varepsilon \sim \mathcal{N}(0, 1)$. The ACM is shown in the  \hyperref[fig:acm_in_quadratic_berkson]{Table \ref*{fig:acm_in_quadratic_berkson}} .

\begin{figure}[htbp]
    \renewcommand{\figurename}{Table}
    \renewcommand{\thefigure}{1}
    \centering
    \includegraphics[width=0.95\textwidth]{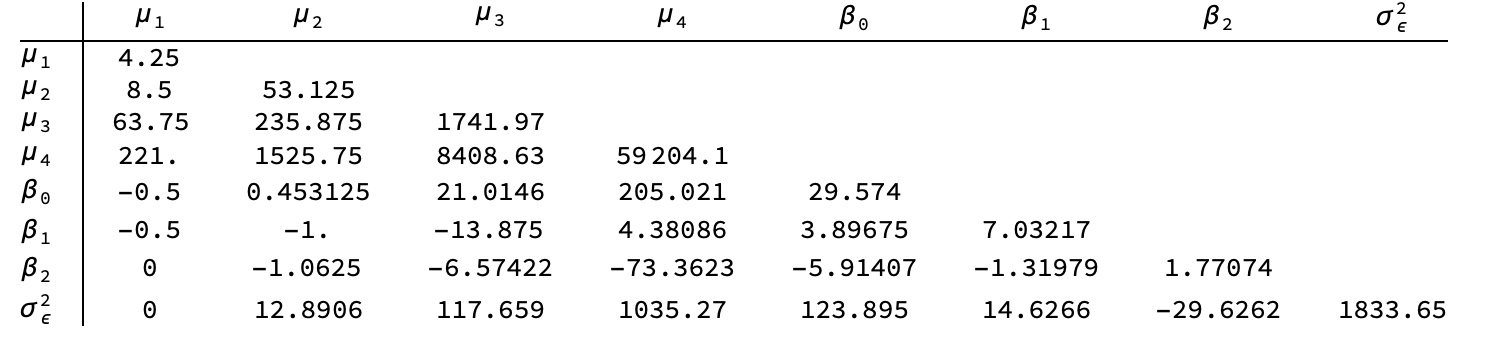}
    \caption{ACM for the quadratic model with the specific parameters}.
    \label{fig:acm_in_quadratic_berkson}

    \addtocounter{figure}{-1}
\end{figure}

By replicating the first half of the proof of  \hyperref[theorem:asymptotic_independence_of_mu_1_x_and_epsilon_variance]{Theorem \ref*{theorem:asymptotic_independence_of_mu_1_x_and_epsilon_variance}}, it is fairly easy to show that if  conditions (i)–(v) hold and $\delta=0$, then for every $1\leq i \leq2d$ the estimators $\widehat{\mu}_x^{(i)}$ and $\widehat{\sigma}^2_\varepsilon$ are asymptotically independent, which together with  \hyperref[corollary:independence_of_mu_x_i_and_beta_j]{Corollary \ref*{corollary:independence_of_mu_x_i_and_beta_j}} shows that it is the classical measurement error $\delta$ that breaks the asymptotic independence, and not the Berkson error $u$.

\FloatBarrier

\section{Proportion in which errors in response and Berkson errors affect the quality of the estimate for the leading coefficient}

Recall the estimating function for $\beta=(\beta_0, \beta_1, \ldots, \beta_d)^\mathrm{T}$  given in \eqref{estimating_function_beta}:  
$$S^{(\beta)}_{CS}=S^{(\beta)}_{CS}(w,y,\beta)= T(w)U^{\mathrm{T}}\beta - h(w,y).$$
\noindent The estimator for $\beta$ given in \eqref{estimator_beta} is asymptotically normal:
$$\sqrt{n}(\widehat{\beta} - \beta) \xrightarrow{d} \mathcal{N}_{d+1}(0, \Sigma^{(\beta)})$$
\noindent with a nonsingular ACM $\Sigma^{(\beta)}$ that can be evaluated by the Sandwich Formula:
$$\Sigma^{(\beta)}=U^{-\mathrm{T}} H_x^{-1} \,\mathbf{E}\left[S^{(\beta)}_{CS} \, S^{(\beta) \, \mathrm{T}}_{CS}\right] H_x^{-1}\, U^{-1}$$
 where $H_x = (\mathbf{E}x^{i+j})_{0\leq i,j \leq d }$. We number rows and columns of each matrix by indices $0,1, \ldots, d$ in a usual way. The asymptotic variance $\sigma^2_{\beta_d}$ of the estimator for $\beta_d$ is the $(d,d)$-entry of the matrix $\Sigma^{(\beta)}$. It is the value we are interested in. 

Since $U^\mathrm{T}$ is an upper-triangular matrix, its inverse $(U^{\mathrm{T}})^{-1}$ is upper-triangular as well  and matrix $U^{-1}$ is lower-triangular. Therefore, the matrices $U^{-\mathrm{T}}=(U^{\mathrm{T}})^{-1}$ and $U^{-1}$ do not affect the $(d,d)$-entry of the matrix $\Sigma^{(\beta)}$. Let $\rho^{(d)}_x$ stand for the $d$th column of the matrix $[(\mathbf{E}x^{i+j})_{0\leq i, j \leq d}]^{-1}$, then $\sigma^2_{\beta_d}$ equals $\rho^{(d) \, \mathrm{T}}_x \cdot \mathbf{E}\left[S^{(\beta)}_{CS} \, S^{(\beta) \, \mathrm{T}}_{CS}\right] \cdot \rho^{(d)}_x$.

 Denote $ (S^{(a,b)})_{0\leq a,b \leq d}=\mathbf{E}~ S^{(\beta)}_{CS} \, S^{(\beta) \mathrm{T}}_{CS} $. We have

\begin{equation*}
    \begin{split}
    S^{(a,b)} & = \mathbf{E} \left(\sum_{i=0}^{d}t_{a+i}(w)\cdot\sum_{j=i}^{d}\binom{j}{i}\mathbf{E}[u^{j-i}]\beta_j - y\,t_a(w)\right) \times \\ & \ \ \ \ \ \times\left(\sum_{i=0}^{d}t_{b+i}(w)\cdot\sum_{j=i}^{d}\binom{j}{i}\mathbf{E}[u^{j-i}]\beta_j - y\,t_b(w)\right)= \\ 
    & = \mathbf{E} \left(\sum_{j=0}^{d}\beta_j\left[\sum_{i=0}^{j}\binom{j}{i}\mathbf{E}[u^{j-i}]t_{a+i}(w)\right] - \sum_{j=0}^{d} \beta_j \xi^{j}t_a(w)-\varepsilon t_a(w)\right) \times \\ & \ \ \ \ \ \times \left(\sum_{j=0}^{d}\beta_j\left[\sum_{i=0}^{j}\binom{j}{i}\mathbf{E}[u^{j-i}]t_{b+i}(w)\right] - \sum_{j=0}^{d} \beta_j \xi^{j}t_b(w)-\varepsilon t_b(w)\right)= \\
    &= \mathbf{E} \left(\sum_{j=0}^{d}\beta_j\left[\sum_{i=0}^{j}\binom{j}{i}\mathbf{E}[u^{j-i}]t_{a+i}(w) - \xi^{j}t_a(w)\right] - \varepsilon t_a(w)\right) \times \\ & \ \ \ \ \ \times  \left(\sum_{j=0}^{d}\beta_j\left[\sum_{i=0}^{j}\binom{j}{i}\mathbf{E}[u^{j-i}]t_{b+i}(w) - \xi^{j}t_b(w)\right] - \varepsilon t_b(w)\right)= \\
    & = \mathbf{E} \left(\sum_{j=0}^{d}\beta_j\left(\sum_{i=0}^{j}\binom{j}{i}\left\{\mathbf{E}[u^{j-i}]t_{a+i}(w) - u^{j-i}\,  x^it_a(w)\right\}\right) - \varepsilon t_a(w)\right) \times \\ & \ \ \ \ \ \times \left(\sum_{j=0}^{d}\beta_j\left(\sum_{i=0}^{j}\binom{j}{i}\left\{\mathbf{E}[u^{j-i}]t_{b+i}(w) - u^{j-i}\,  x^it_b(w)\right\}\right) - \varepsilon t_b(w)\right).
\end{split}
\end{equation*}

\noindent According to the construction of polynomials $t_r$ described in \hyperref[subsec:estimation_of_moments_x]{Subsection \ref*{subsec:estimation_of_moments_x}}, we have
\begin{equation*}
    \begin{split}
        t_{r+i}(w) & =(x+\delta)^{r+i} - \sum_{k=0}^{r+i-1}\binom{r+i}{k}\mathbf{E}[\delta^{r+i-k}]t_k(w) = \\
        & = x^{r+i} + \sum_{k=0}^{r+i-1 }\binom{r+i}{k}\left(\delta^{r+i-k}x^k - \mathbf{E}[\delta^{r+i-k}]t_k(w)\right) =: \\
        & = :x^{r+i}+p_{r+i}(x,\delta).
    \end{split}
\end{equation*}

\noindent Introduce r.v.'s  for $r=0, 1, \ldots, d$:
$$\Upsilon^{(r)}_{\delta} = \sum_{j=0}^{d} \beta_j(t_{r+j}(w)-x^{j}t_r(w)),$$
$$\Upsilon^{(r)}_{u} = \sum_{j=0}^{d}\beta_j\left(\sum_{i=0}^{j} \binom{j}{i}x^{r+i}\{\mathbf{E}[u^{j-i}]-u^{j-i}\}\right),$$
$$\Upsilon^{(r)}_{\delta, u} = \sum_{j=0}^{d}\beta_j\left(\sum_{i=0}^{j-1}\binom{j}{i}\left\{\mathbf{E}[u^{j-i}]p_{r+i}(x,\delta) - u^{j-i}\,  x^i p_r(x, \delta)\right\}\right).$$

\noindent We see that 

\begin{equation*}
    \begin{split}
       S^{(a,b)} & = \mathbf{E}\left(\Upsilon^{(a)}_{\delta} + \Upsilon^{(a)}_u + \Upsilon^{(a)}_{\delta, u} - \varepsilon t_a(w)\right)\left(\Upsilon^{(b)}_{\delta} + \Upsilon^{(b)}_u + \Upsilon^{(b)}_{\delta, u} - \varepsilon t_b(w)\right) \\
       & = \sigma^2_\varepsilon \mathbf{E}t_a(w)t_b(w) + \\ &  \ \ \ \, + \left(\mathbf{E}\Upsilon_\delta^{(a)}\Upsilon_\delta^{(b)}+\mathbf{E}\Upsilon_{u}^{(a)}\Upsilon_{u}^{(b)}+\mathbf{E}\Upsilon^{(a)}_{\delta, u}\Upsilon^{(b)}_{\delta, u} + \mathbf{E}[\Upsilon^{(a)}_{\delta} \Upsilon^{(b)}_{u} + \Upsilon^{(a)}_{u} \Upsilon^{(b)}_{\delta}]\right. + \\ 
        & \ \ \ \, + \left. \ \mathbf{E}[\Upsilon^{(a)}_{\delta, u}(\Upsilon_{\delta}^{(b)} + \Upsilon_{u}^{(b)}) + \Upsilon^{(b)}_{\delta, u}(\Upsilon_{\delta}^{(a)} + \Upsilon_{u}^{(a)})]\right)=:\Delta^{(a,b)}_{\delta, \varepsilon}+\Delta^{(a,b)}_{\delta, u}. 
    \end{split}
\end{equation*}

\noindent Finally, denote 

$$\Lambda^2_{\delta, \varepsilon} = \rho^{(d) \mathrm{T}}_x \left(\Delta^{(a,b)}_{\delta, \varepsilon}\right)_{0\leq a,b \leq d} \ \rho^{(d)}_x,$$

$$\Lambda^2_{\delta, u} = \rho^{(d) \mathrm{T}}_x \left(\Delta^{(a,b)}_{\delta, u}\right)_{0\leq a,b \leq d} \ \rho^{(d)}_x$$

\noindent with nonegative $\Lambda_{\delta, \varepsilon}$ and $\Lambda_{\delta, u}$. We have
$$\sigma^2_{\beta_d} =\Lambda^2_{\delta, \varepsilon} + \Lambda^2_{\delta, u}.$$
It means that $\sqrt{n}(\widehat{\beta}_d - \beta_d)$ can be approximated in distribution by r.v.
$\Lambda_{\delta, \varepsilon} \gamma_1 +  \Lambda_{\delta, u} \gamma_2 $
 with $\gamma_1$ and  $\gamma_2$ independent standard normal. Given the moments of
$x$ and $\sigma^2_\varepsilon,$
 terms $\Lambda_{\delta, \varepsilon} \gamma_1$ and $\Lambda_{\delta, u} \gamma_2$ 
distinguish the influence of the  error in response and Berkson error  on the precision of the  estimator for the leading regression coefficient. Thus,
this influence can be evaluated in proportion $\Lambda_{\delta, \varepsilon}: \Lambda_{\delta, u}.$

 Suppose that $(\beta_1,\beta_2,\ldots,\beta_d)\neq 0$ and   moments of errors $\delta$ and $u$ up to order $2d$ be known.
 Then the influence can be estimated in proportion $\widehat{\Lambda}_{\delta,\varepsilon}:\widehat{\Lambda}_{\delta,u},$ where we plug-in the estimators of the regression parameters and nuisance parameters into the expressions for $\Lambda_{\delta, \varepsilon}$ and 
$\Lambda_{\delta, u}.$
 \\

\section{Simulation}

\begin{figure}[htbp]
    \centering
    \includegraphics[width=0.95\textwidth]{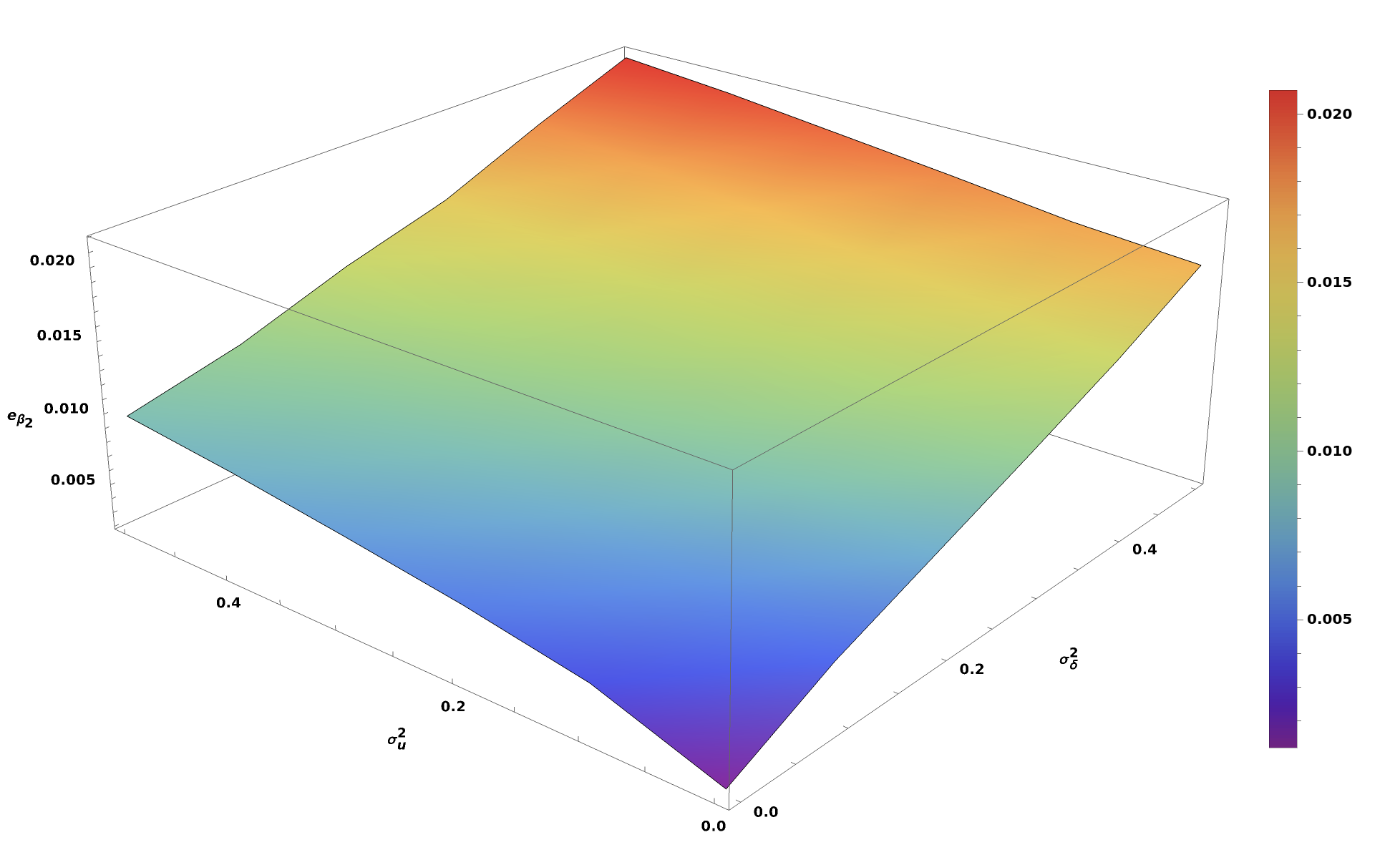}
    \caption{Dependence of $e_{\beta_2}$ on $(\sigma^2_{\delta}, \sigma^2_{u})$}.
    \label{fig:dependence_e_beta_2_on_delta_and_u}
\end{figure}

\begin{figure}[htbp]
    \centering
    \includegraphics[width=0.95\textwidth]{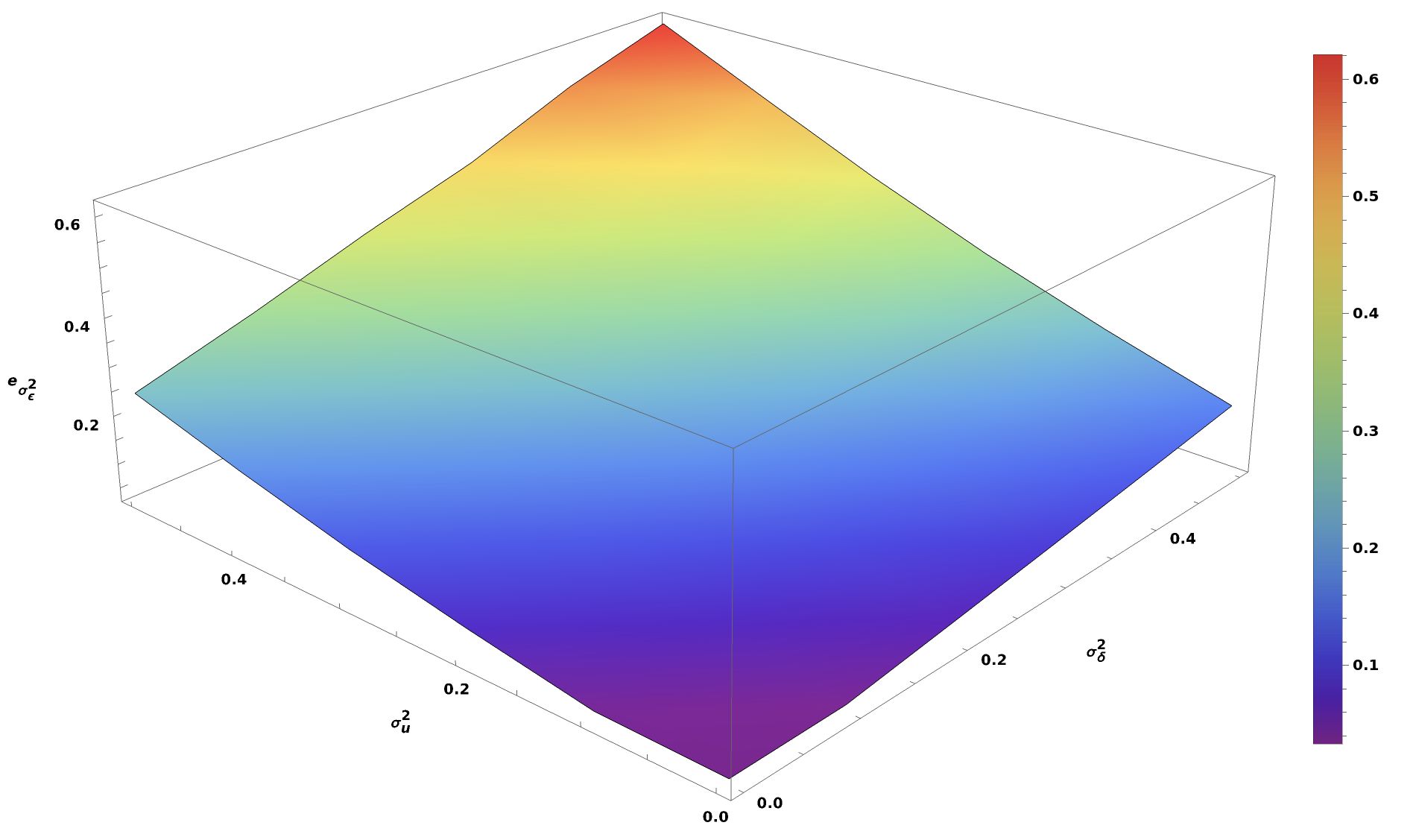}
    \caption{Dependence of $e_{\sigma^2_{\varepsilon}}$ on $(\sigma^2_{\delta}, \sigma^2_{u})$}
    \label{fig:dependence_e_epsilon_variance_on_delta_and_u}
\end{figure}

\begin{figure}[htbp]
    \centering
    \includegraphics[width=0.95\textwidth]{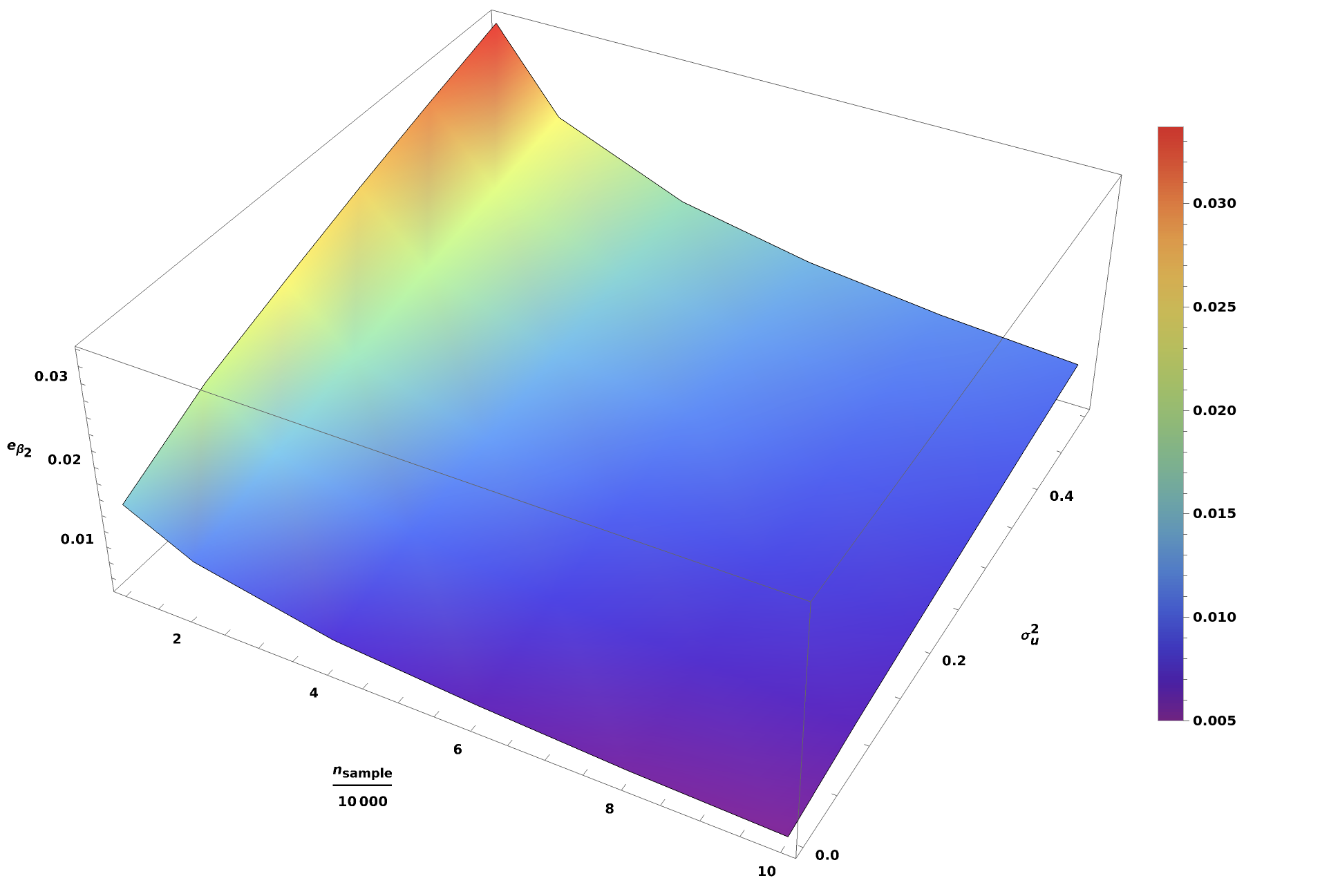}
    \caption{Dependence of $e_{\beta_2}$ on $(n_s, \sigma^2_{u})$}
    \label{fig:dependence_e_beta_2_on_n_and_u}
\end{figure}

\begin{figure}[htbp]
    \centering
    \includegraphics[width=0.95\textwidth]{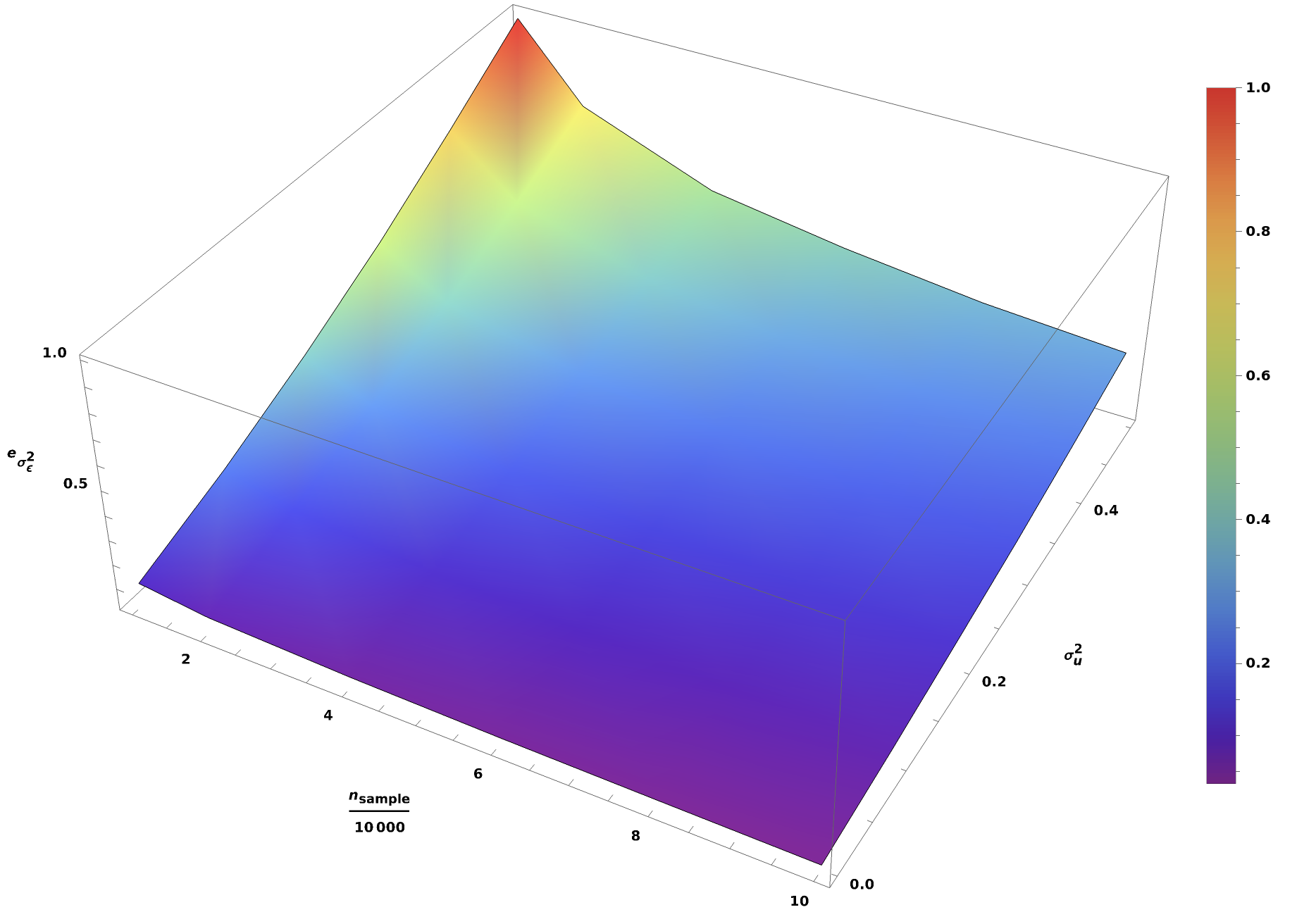}
    \caption{Dependence of $e_{\sigma^2_{\varepsilon}}$ on $(n_{s}, \sigma^2_{u})$}
    \label{fig:dependence_e_epsilon_variance_on_n_and_u}
\end{figure}

\begin{center}
    \begin{figure}[!htb]
   \begin{minipage}{0.48\textwidth}
     \renewcommand{\figurename}{Table}
     \renewcommand{\thefigure}{2}
     \centering
     \includegraphics[width=0.95\linewidth]{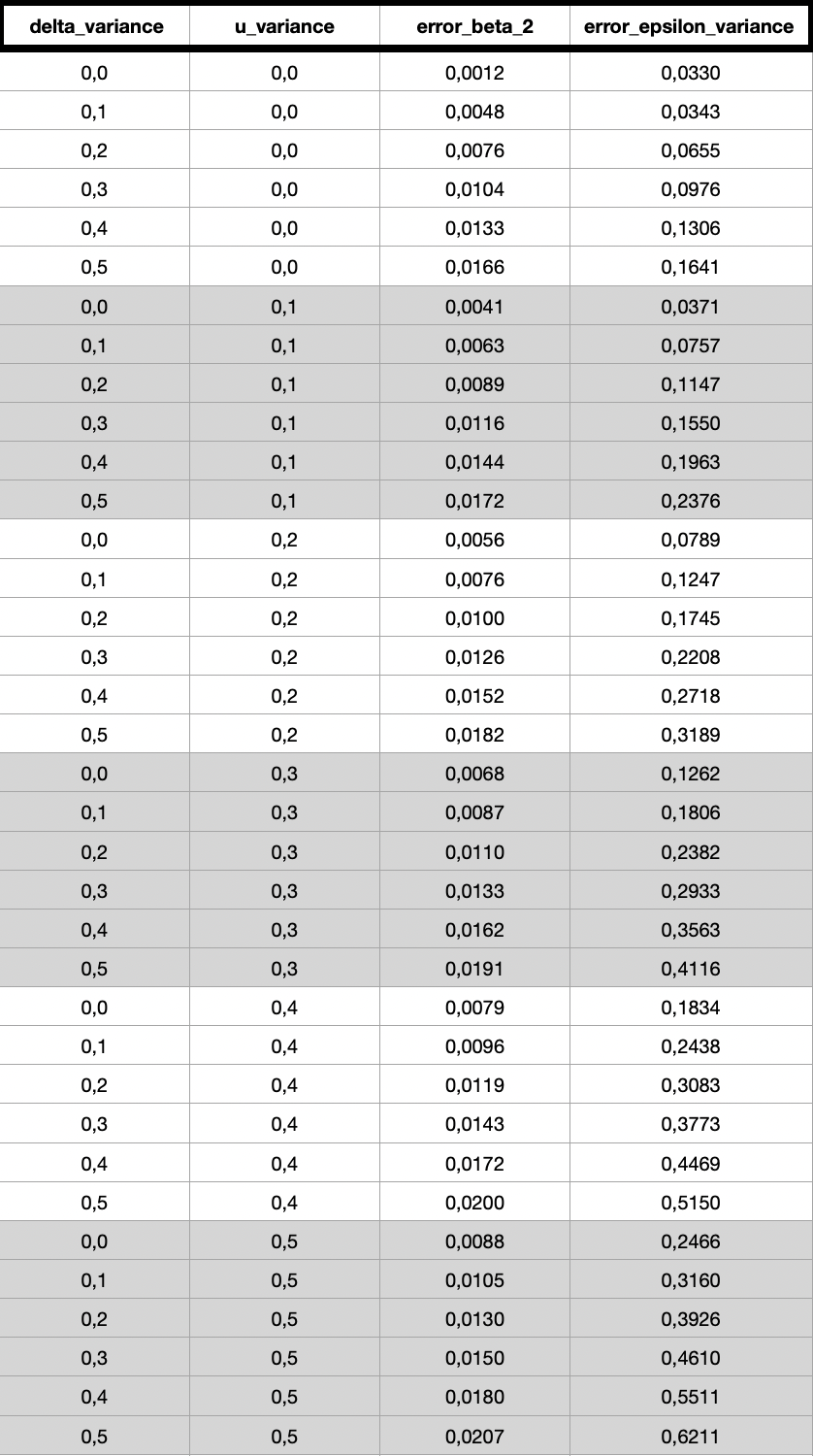}
     \caption{\centering Dependence of $e_{\beta_2}$ and $e_{\sigma^2_\varepsilon}$ on $(\sigma^2_\delta, \sigma^2_u)$}\label{fig:table_dependence_on_delta_variance_and_u_variance}
   \end{minipage}\hfill
   \begin{minipage}{0.48\textwidth}
     \renewcommand{\figurename}{Table}
     \renewcommand{\thefigure}{3}
     \centering
     \includegraphics[width=0.95\linewidth]{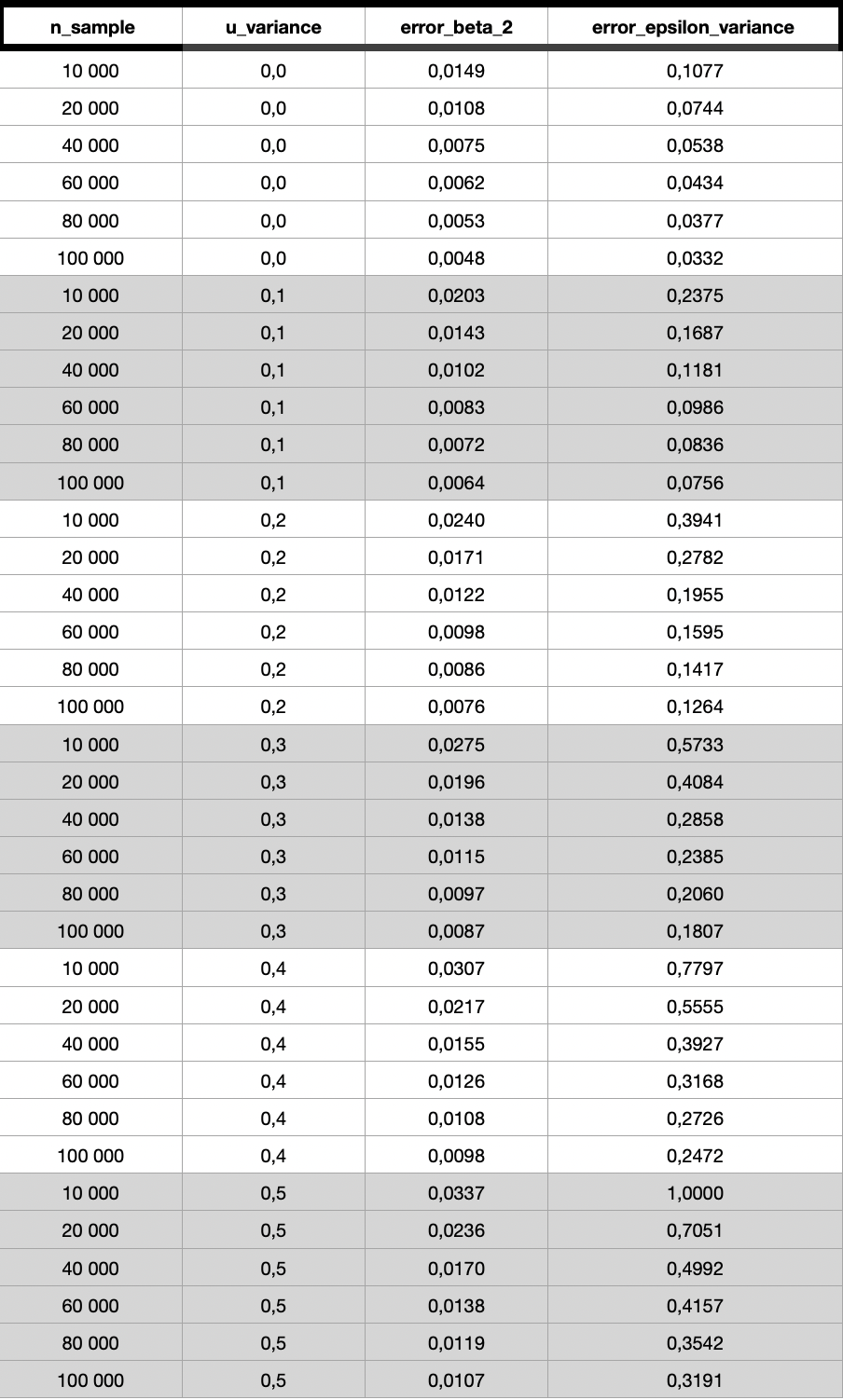}
    \caption{\centering Dependence of $e_{\beta_2}$ and $e_{\sigma^2_\varepsilon}$ on $(n_{s}, \sigma^2_u)$}\label{fig:table_dependence_on_n_sample_and_u_variance}
   \end{minipage}
   \addtocounter{figure}{-2}
\end{figure}
\end{center}

We run the simulation to study the quality of estimates for the coefficient $\beta_2$ and the  variance $\sigma^2_{\varepsilon}$ in the quadratic measurement error model (i.e., for $d=2$). 
In our modeling we have  $\beta_0=-1$, $\beta_1=2$, $\beta_2=1$, $x \sim \mathcal{N}(2,1)$, $\delta \sim \mathcal{N}(0, \sigma^2_{\delta})$, $u \sim \mathcal{N}(0, \sigma^2_{\delta})$, $\varepsilon \sim \mathcal{N}(0, \sigma^2_{\varepsilon})$ with $\sigma^2_{\varepsilon}=0.5$. We run the estimation procedure $N=10^4$ times on the independent samples of size $n_s$ each. To evaluate the quality of estimation we use the (truncated) arithmetic mean of the relative errors:
$$e_{\beta_2} = \underset{1\leq k\leq N}{\mathrm{truncated \ arithmetic \ mean}}\left(\frac{\vert\widehat{\beta}_{2,k}-\beta_2\vert}{|\beta_2|}\right),$$

$$e_{\sigma^2_{\varepsilon}} =\underset{1\leq k\leq N}{\mathrm{truncated \ arithmetic \ mean}}\left(\frac{\vert\widehat{\sigma}^2_{\varepsilon,k}-\sigma^2_{\varepsilon}\vert}{\sigma^2_{\varepsilon}}\right).$$

\noindent Here,  $\widehat{\beta}_{2,,k}$ and  $\widehat{\sigma}^2_{\varepsilon,k}$ stand for the values of the corresponding estimators obtained for the $k$th realization. For the $e_{\beta_2}$ we choose the truncation level to be $10\%$ and for the $e_{\sigma^2_{\varepsilon}}$  $12\%$. The reason of working with the truncated means rather than standard arithmetic means is that the first moments of relative errors do not exist. This is typical for errors-in-variables models \cite{Cheng_Kukush}.

\hyperref[fig:dependence_e_beta_2_on_delta_and_u]{Figure \ref*{fig:dependence_e_beta_2_on_delta_and_u}} and \hyperref[fig:dependence_e_epsilon_variance_on_delta_and_u]{Figure \ref*{fig:dependence_e_epsilon_variance_on_delta_and_u}} demonstrate how $e_{\beta_2}$ and $e_{\sigma^2_{\varepsilon}}$ depend of $(\sigma^2_{\delta}, \sigma^2_{u})$. In each of $N=10^4$ realizations  the sample size $n_s=10^5$ was used. For our next experiment we additionally set $\sigma^2_{\delta} = 0.1$. The aim of this second experiment is to study the dependence of $e_{\beta_2}$ and $e_{\sigma^2_{\varepsilon}}$ on $(n_s, \sigma^2_{u})$. See \hyperref[fig:dependence_e_beta_2_on_n_and_u]{Figure \ref*{fig:dependence_e_beta_2_on_n_and_u}} and \hyperref[fig:dependence_e_epsilon_variance_on_n_and_u]{Figure \ref*{fig:dependence_e_epsilon_variance_on_n_and_u}} below.
The graphs below were constructed based on data, that you may find in \hyperref[fig:table_dependence_on_delta_variance_and_u_variance]{Table \ref*{fig:table_dependence_on_delta_variance_and_u_variance}} and \hyperref[fig:table_dependence_on_n_sample_and_u_variance]{Table \ref*{fig:table_dependence_on_n_sample_and_u_variance}} .

Surprisingly, by observing the \hyperref[fig:dependence_e_beta_2_on_delta_and_u]{Figure \ref*{fig:dependence_e_beta_2_on_delta_and_u}} and \hyperref[fig:dependence_e_epsilon_variance_on_delta_and_u]{Figure \ref*{fig:dependence_e_epsilon_variance_on_delta_and_u}} one can notice that the dependence of $e_{\beta_2}$ and  $e_{\sigma^2_\varepsilon}$ on $(\sigma^2_\delta, \sigma^2_u)$ is almost linear. Looking at \hyperref[fig:dependence_e_beta_2_on_n_and_u]{Figure \ref*{fig:dependence_e_beta_2_on_n_and_u}} and \hyperref[fig:dependence_e_epsilon_variance_on_n_and_u]{Figure \ref*{fig:dependence_e_epsilon_variance_on_n_and_u}} we see that for a fixed $\sigma^2_u$, the dependence of $e_{\beta_2}$ and $e_{\sigma^2_\varepsilon}$ on $n_{s}$ resembles one from the quadratic model in which only the classical measurement error $\delta$ is present. We mention that with growth of  $\sigma^2_u$ the relative errors $e_{\beta_2}$ and $e_{\sigma^2_\varepsilon}$ become more sensitive to the enlargement of the sample size.   

\FloatBarrier

\section{Conclusion}

We studied the polynomial errors-in-variables regression model under a mixture of the classical and Berkson errors. With the aid of Corrected Score method we modified the standard least-squares estimator and constructed the strongly consistent estimators for the following unknown parameters: regression coefficients $\beta_0, \beta_1, \ldots, \beta_d$, variance of the error in response $\sigma^2_{\varepsilon},$ and the first $2d$ moments  of r.v. $x$. We highlight that in the linear case ($d=1$) our estimators coincide with the one obtained in  Yakovlev \& Kukush (2021)~ \cite{Yakovliev_Kukush}, despite the fact that those estimators were constructed in a different way (based on the Least Squares estimator though).

Unlike in the linear model, the estimators for the regression coefficients $\beta_0, \beta_1, \ldots, \beta_d$ in the nonlinear model ($d\geq 2$) require the knowledge of higher moments of Berkson error $u$.
We established the asymptotic normality of the constructed estimators and showed that under fairy general conditions, the corresponding asymptotic covariance matrix (ACM) is nonsingular. In addition we constructed the strongly consistent estimator for the ACM. 

By analyzing the ACM we evaluated the proportion in which, given the moments of $\delta,$ the error in response and Berkson error affect the quality of estimation of the coefficient $\beta_d$. Also we showed that under mild conditions, the estimators for $\beta_d$ and $\mathbf{E}x$ are  asymptotically independent, as well as the estimators for $\mathbf{Var(\varepsilon)}$ and $\mathbf{E}x$. This is helpful to construct a simultaneous confidence region for the pair of parameters.

Finally, we ran the computer simulation and studied the dependence of relative errors of estimation of the parameters $\beta_2$ and $\sigma^2_{\varepsilon}$ in the quadratic model ($d=2$) on the variances of normally distributed classical and Berkson errors, and also studied the dependence of relative errors on the variance of normally distributed Berkson error and the sample size. In particular simulations demonstrate that  estimation of the error variance $\sigma^2_{\varepsilon}$   is computationally more difficult compared with  estimation of the regression coefficients and  it requires notably larger samples to produce  estimates of high quality.

\end{document}